\keywords{3-manifolds, fiber bundles, Bass-Serre trees}
\theoremstyle{plain} 
\newtheorem{pro}[thm]{Proposition}
\newtheorem{con}[thm]{Conjecture}
\newtheorem{remark}[thm]{Remark}
\newenvironment{customthm}[1]
{\innercustomthm}
{\endinnercustomthm}
\begin{document}

\title[A fibering theorem for 3-manifolds]{A fibering theorem for 3-manifolds}

\author[Jordan A. Sahattchieve]{Jordan A. Sahattchieve}	
\address{Formerly Department of Mathematics, University of Michigan in Ann Arbor}	
\email{jantonov@umich.edu}  

\begin{abstract}
  \noindent This paper generalizes results of M. Moon on the fibering of certain compact 3-manifolds over the circle.  It also generalizes a theorem of H. B. Griffiths on the fibering of certain 2-manifolds over the circle.
\end{abstract}

\maketitle

\section{Introduction}

  Consider a 3-manifold $M$ which fibers over $\mathbb{S}$ with fiber a compact surface $F$.  The bundle projection $\eta$ induces a homomorphism of $\pi_1(M)$ onto $\mathbb{Z}$ whose kernel is precisely $\pi_1(F)$.  One can alternatively think of $M$ as the mapping torus of $F$ under the automorphism of $F$ given by the monodromy of the bundle.  We thus have a short exact sequence $1 \rightarrow \pi_1(F) \rightarrow \pi_1(M) \rightarrow \mathbb{Z} \rightarrow 1$.  Stallings proved in \cite{Stallings} a converse to this:
	
\begin{thm} \label{Stal}\textbf{(Stallings, Theorems 1 and 2 \cite{Stallings}, 1961)} Let $M$ be a compact, irreducible 3-manifold. Suppose that there is a surjective homomorphism $\pi_1(M) \rightarrow \mathbb{Z}$ whose kernel $G$ is finitely generated and not of order 2.  Then, $M$ fibers over $\mathbb{S}$ with fiber a compact surface $F$ whose fundamental group is isomorphic to $G$.
\end{thm}
	
	In \cite{HempelJaco} Hempel and Jaco prove that a compact 3-manifold fibers under somewhat relaxed assumptions:
	
\begin{thm}\label{HmpJc}\textbf{(Hempel-Jaco, Theorem 3 \cite{HempelJaco}, 1972)} Let $M$ be a compact 3-manifold. Suppose that there is an exact sequence $1\rightarrow N\rightarrow M\rightarrow Q\rightarrow 1$, where $N$ is a nontrivial, finitely presented, normal subgroup of $\pi_1(M)$ with infinite quotient $Q$.  If $N\neq\mathbb{Z}$ and $M$ contains no 2-sided projective plane, then $\hat{M}=M_1\#\Sigma$, where $\Sigma$ is a homotopy 3-sphere and $M_1$ is either:
		
		(i) a fiber bundle over $\mathbb{S}$ with fiber a compact 2-manifold F, or
		
		(ii) the union of two twisted I-bundles over a compact manifold $F$ which meet at the corresponding 0-sphere bundles.
	In either case, $N$ is a subgroup of finite index of $\pi_1(F)$ and $Q$ is an extension of a finite group by either $\mathbb{Z}$ (case (i)), or $\mathbb{Z}_2*\mathbb{Z}_2$ (case(ii)).
\end{thm}
	
	The manifold $\hat{M}$ is obtained from $M$ by attaching a 3-ball to every 2-sphere in the boundary of $M$.  The reader should be aware of some advances in the field made after Theorem \ref{HmpJc} appeared in print, which have direct bearing on the manner in which it can be applied, as well as on its conclusion: In \cite{ScottCoh} Scott proves that every finitely generated 3-manifold group is finitely presented - thus, it suffices to assume that $N$ is only finitely generated.  Also, in view of the proof of the Poincare Conjecture by Perelman, one has a conclusion about $\hat{M}$, as there are no homotopy 3-spheres other than $\mathbb{S}^3$, which is the identity for the operation of forming a connected sum.  An assumption of irreducibility on $M$ allows us to obtain conclusions about $M$ since in that case $\hat{M}=M$.
	
	One dimension lower, in the case when $M$ is a 2-manifold, Griffiths proved the following fibering theorem in \cite{Griffiths}:
	
\begin{thm}\label{Grf}\textbf{(Griffiths, Theorem \cite{Griffiths}, 1962)} Let $M$ be a 2-manifold whose fundamental group $G$ contains a finitely generated subgroup $U$ of infinite index, which contains a non-trivial normal subgroup $N$ of $G$.  Then, $M$ is homeomorphic to either the torus or the Klein bottle.
\end{thm}
	
	Theorem \ref{Grf} is the starting point for my investigation as well as the work in \cite{Moon}. 
	While the above result does not use the word \textit{bundle} explicitly, it is easy to see that both the torus and the Klein bottle are $\mathbb{S}$-bundles over either $\mathbb{S}$ or the 1-dimensional orbifold $\mathbb{S}/\mathbb{Z}_2$, where $\mathbb{Z}_2$ acts on $\mathbb{S}$ by reflection.  As Moon notes in \cite{Moon}, if we adopt this point of view, the above fibering theorems suggest a result analogous to Griffiths' theorem for 3-manifolds.  Indeed, in \cite{Moon} Moon proves such a result for compact, geometric manifolds and their torus sums:
	
\begin{thm}\label{Mn}\textbf{(Moon, Corollary 2.11 \cite{Moon}, 2005)} Let $M$ be an irreducible, compact, orientable 3-manifold, which is either a torus sum $X_1 \bigcup_T X_2$, or $X_1 \bigcup_{T}$, where each $X_i$ is either a Seifert fibered space or a hyperbolic manifold.  If $G=\pi_1(M)$ contains a finitely generated subgroup $U$ of infinite index which contains a non-trivial normal subgroup $N$ of $G$, which intersects non-trivially the fundamental group of the splitting torus, and such that $N \cap \pi_1(X_i)$ is not isomorphic to $\mathbb{Z}$, then $M$ has a finite cover which is a bundle over $\mathbb{S}$ with fiber a compact surface $F$, and $\pi_1(F)$ is commensurable with $U$.
\end{thm}
	
	On the other hand, Elkalla showed in \cite{Elkalla} that if one additionally assumes that $G$ is $U$-residually finite or, in other words, that for every $g\in G-U$ one can find a finite index subgroup $U_1$ of $G$ which contains $U$ but not $g$, and that if $M$ is $P^2$-irreducible, then one can replace the normal $N$ with a subnormal one:  
	
\begin{thm}\label{Elk1}\textbf{(Elkalla, Theorem 3.7 \cite{Elkalla}, 1983)} Let $M$ be a $P^2$-irreducible, compact and connected 3-manifold.  If $G=\pi_1(M)$ contains a non-trivial subnormal subgroup $N$ such that $N$ is contained in an indecomposable and finitely generated subgroup $U$ of infinite index  in $G$, and if $G$ is $U$-residually finite, then either (i) the Poincare associate of $M$ is finitely covered by a manifold, which is a fiber bundle over $\mathbb{S}$ with fiber a compact surface $F$, such that there is a subgroup $V$ of finite index in both $\pi_1(F)$ and $U$, or (ii) $N$ is isomorphic to $\mathbb{Z}$.
\end{thm}

	Again, one should view the conclusions of Theorem \ref{Elk1} in the context of the Poincare Conjecture which is now a theorem; thus, under the assumptions made on $M$, Theorem \ref{Elk1} implies that $M$ is itself finitely covered by a bundle over $\mathbb{S}$.
	
	Throughout this paper we shall use the notation $N \triangleleft_{s} G$ to stand for the relationship of subnormality of a subgroup to the ambient group.
	
	These results suggest to us that the following:
\begin{con}\label{FibConj}\textbf{(Scott, 2010)} Every irreducible, compact 3-manifold whose fundamental group $G$ contains a subnormal subgroup $N\neq\mathbb{Z}$ contained in a finitely generated subgroup $U$ of infinite index in $G$, virtually fibers over $\mathbb{S}$ with fiber a compact surface $F$ such that $N$ is commensurable with $\pi_1(F)$.
\end{con}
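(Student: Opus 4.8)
The plan is to prove Conjecture~\ref{FibConj} by peeling $M$ apart into geometric pieces via geometrization and the JSJ decomposition, handling each piece with Moon's theorem (Theorem~\ref{Mn}) and Stallings' theorem (Theorem~\ref{Stal}), and then reassembling a surface bundle structure on a finite cover of $M$ across the splitting tori. I would fix a subnormal series $N=N_0\triangleleft N_1\triangleleft\cdots\triangleleft N_r=G$ and induct on $r$; the base case $r=1$, where $N$ is normal, is essentially Theorem~\ref{HmpJc} --- using Scott's theorem to weaken ``finitely presented'' to ``finitely generated'' and Perelman's theorem together with irreducibility to replace $\hat{M}$ by $M$. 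Before the induction I would dispose of the degenerate cases: if $\pi_1(M)$ is finite the hypotheses are vacuous, and if $M$ carries one of the non-Haken geometries (in particular is Seifert fibered, or has Sol, Nil, or Euclidean geometry) the conclusion follows from the classification, the Seifert or torus-bundle fibration itself --- pulled back to a suitable finite cover --- being the desired bundle, with an incompressible $N$ of infinite index automatically commensurable with the fiber group. Thus I may assume $M$ is Haken with a nontrivial JSJ splitting, which gives $G$ a graph-of-groups structure with vertex groups $\pi_1(X_v)$ the fundamental groups of the Seifert and atoroidal pieces, edge groups the $\mathbb{Z}^2$'s of the splitting tori, and Bass--Serre tree $T$.

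The heart of the argument is the interaction of $N$ with $T$. Applying the standard analysis of a normal subgroup acting on a tree, iterated up the subnormal series, each $N_i$ is either elliptic on the minimal $N_{i-1}$-subtree (fixes a vertex), or fixes a unique end of it, or has minimal subtree all of it. If $N$ lands in a conjugate of a vertex group $G_v=\pi_1(X_v)$, then $X_v$ is a geometric manifold with toral boundary and Theorem~\ref{Mn} --- or Theorem~\ref{Stal} together with the now-known virtual fibering of hyperbolic pieces, in its boundary version --- produces a finite cover of $X_v$ fibering over $\mathbb{S}$ with fiber surface group commensurable with $N$; the work is then to propagate this fibration across the adjacent pieces along their shared tori. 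If $N$ fixes an end of $T$, the end is unique and hence $G$-invariant, which pins $M$ down to a graph-manifold-like form amenable to a direct treatment. The principal case is that $N$'s minimal subtree is all of $T$; here I would first establish (this being one of the technical crux points, using both the normality present in the series and the finite generation of $U$) that $N$ meets each splitting-torus subgroup in an infinite cyclic group, so that in each piece $X_v$ the subgroup $N\cap G_v$ is noncyclic, infinite, and normalized by enough of $G_v$ for Theorem~\ref{Mn} to yield a virtual fiber of $X_v$ commensurable with $N\cap G_v$. Finally I would splice these virtual fibers of covers of the individual pieces along the tori --- after passing to a common finite cover of $M$ --- exactly as in Moon's torus-sum construction, arriving at a surface $F$ in a finite cover of $M$ that fibers it over $\mathbb{S}$ and has $\pi_1(F)$ commensurable with $N$.

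I expect two genuine obstacles, and they are precisely what keeps Conjecture~\ref{FibConj} open. The first is the compatibility of the finite covers chosen in different JSJ pieces: along each splitting torus the boundary slopes of the virtual fibers on the two sides must be matched and then pulled back to a single finite cover of $M$, and it is in controlling these slopes that the strong conclusion ``$\pi_1(F)$ commensurable with $N$'' --- as opposed to mere virtual fibering --- is won or lost. The second is that Theorem~\ref{Mn} carries the hypothesis $N\cap\pi_1(X_i)\not\cong\mathbb{Z}$; in the subnormal setting this must be \emph{derived} rather than assumed, presumably by using subnormality of $N$ in $G$ to argue that an $N$ that were infinite cyclic in some piece or edge would be pushed, moving up the series, into a subgroup of rank at least $2$, contradicting $N\ne\mathbb{Z}$ --- but making this precise across a graph of groups with hyperbolic vertex pieces is delicate. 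A complementary line I would pursue in parallel is to try instead to verify the hypothesis of Elkalla's theorem (Theorem~\ref{Elk1}) --- namely that $G$ is $U$-residually finite --- and then simply quote it; the catch is that finitely generated subgroups of $3$-manifold groups need not be separable (certain graph manifolds provide counterexamples), so here too one must exploit the very special structure of $U$, that it contains the subnormal subgroup $N\ne\mathbb{Z}$, to force the separability.
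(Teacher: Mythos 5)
The target statement is Conjecture~\ref{FibConj}, which the paper explicitly presents as an open conjecture (attributed to Scott), not as a theorem; the paper offers no proof of it. What the paper does prove are partial results: Theorem~\ref{geomsfiber}, which settles the conjecture for compact geometric manifolds, and Theorem~\ref{Main}, which handles general compact $M$ with toroidal or empty boundary only under three additional hypotheses --- that $N$ has a subnormal series in which all but one term is finitely generated, that $N$ meets the splitting-torus subgroups nontrivially, and that $N\cap\pi_1(X_i)\not\cong\mathbb{Z}$ for each geometric piece $X_i$. Your proposal does not (and candidly does not claim to) close these gaps: you correctly flag as ``genuine obstacles'' precisely the issues that these extra hypotheses in Theorem~\ref{Main} are designed to sidestep, most notably the need to \emph{derive} $N\cap\pi_1(X_i)\not\cong\mathbb{Z}$ rather than assume it, and the need to control the subnormal series well enough to run the Hempel--Jaco/Stallings argument in the pieces. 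So the proposal should be read as a strategy sketch plus an accurate diagnosis of why the conjecture is open, not as a proof.

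Where your outline meets the paper's actual machinery, the overall plan is the same --- geometrization/JSJ, treat the geometric pieces, reassemble across splitting tori --- but the internal organization differs in two respects worth noting. First, you propose an induction on the length $r$ of the subnormal series with base case the normal case via Theorem~\ref{HmpJc}; the paper instead uses Lemma~\ref{Cutoff} to rearrange the series so that the finitely generated terms sit at the top, and then inducts on the \emph{number of geometric pieces}, using Proposition~\ref{propsprime} to propagate property $(A')$ across a single torus sum. Second, the ``compatibility of finite covers across tori'' problem that you identify is handled in the paper not by matching boundary slopes of virtual fibers, but at the Bass--Serre level: Proposition~\ref{HBassSerre} shows that a finite-index subgroup inherits the same Bass--Serre tree with a minimal action, and Proposition~\ref{InfDiam} controls the diameter of the quotient graph after collapsing, which together let one pass to finite covers while staying inside Moon's Theorems~2.4 and~2.9. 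These are cleaner than a direct slope-matching argument and are the technical content that makes Theorem~\ref{Main} go through --- but they still require the extra hypotheses, and neither your outline nor the paper removes them.
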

	
	With a view towards this conjecture, we aim to generalize Theorem \ref{Mn} to the case where $N$ is a subnormal subgroup of $G$ and to make the natural inductive argument Theorem \ref{Mn} suggests in light of the Geometrization Theorem.  Our main result is the following:
	
\begin{customthm}{\ref{Main}}Let $M$ be a compact 3-manifold with empty or toroidal boundary.  If $G=\pi_1(M)$ contains a finitely generated subgroup $U$ of infinite index in $G$ which contains a nontrivial subnormal subgroup $N$ of $G$, then: (a) $M$ is irreducible, (b) if further: 
\begin{enumerate}
\item  $N$ has a subnormal series of length $n$ in which $n-1$ terms are assumed to be finitely generated,
\item $N$ intersects nontrivially the fundamental groups of the splitting tori of some decomposition $\mathfrak{D}$ of $M$ into geometric pieces, and
\item the intersections of $N$ with the fundamental groups of the geometric pieces are not isomorphic to $\mathbb{Z}$,
\end{enumerate}
		then, $M$ has a finite cover which is a bundle over $\mathbb{S}$ with fiber a compact surface $F$ such that $\pi_1(F)$ and $U$ are commensurable.
\end{customthm}

	Many of the proofs of \cite{Moon} lend themselves to being generalized and this paper makes this step.  In doing so, I have omitted proofs in all cases where results from \cite{Moon} generalize verbatim without any need for additional arguments.
	
	The reader will notice that in the process of proving Theorem \ref{Main}, I have verified that Conjecture \ref{FibConj} holds for the class of geometric manifolds:
	
\begin{customthm}{\ref{geomsfiber}}Let $M$ be a compact geometric manifold and let $G=\pi_1(M)$.  Suppose that $U$ is a finitely generated subgroup of $G$ with $|G:U|=\infty$, and suppose that $U$ contains a subnormal subgroup $N \triangleleft_s G$.  If $N$ is not infinite cyclic, then $M$ is finitely covered by a bundle over $\mathbb{S}$ with fiber a compact surface $F$ such that $\pi_1(F)$ is commensurable with $U$. 
\end{customthm}
	
	My proof of Theorem \ref{Main} parallels the arguments in \cite{Moon} and I first prove a generalization of Theorem \ref{Grf} for subnormal subgroups $N$:
	
\begin{customthm}{\ref{SubnGriffths}}Let $S$ be a surface whose fundamental group contains a finitely generated subgroup $U$ of infinite index, which contains a non-trivial subnormal subgroup $N$ of $\pi_1(S)$.  Then, $S$ is the torus or the Klein bottle.
\end{customthm}  
	
	Theorem \ref{SubnGriffths} is a consequence of the following theorem of Griffiths:
	
\begin{thm}\label{GriffithsSub}\textbf{(Griffiths, Theorem 14.7 \cite{Griffiths1}, 1967)} Let $G$ be the fundamental group of an ordinary Fuchsian space (such as a compact orientable surface with or without boundary).  Suppose that $G$ is infinite, not abelian and not isomorphic to $\mathfrak{P}=\mathbb{Z}_2*\mathbb{Z}_2=\left\langle a,b: a^2=1, b^2=1\right\rangle $ or $\mathfrak{M}=\left\langle \mathfrak{P}*\mathfrak{P}: a_1b_1a_2b_2=1\right\rangle$, and suppose that $U$ is a finitely generated subgroup of $G$.  If $U$ contains a non-trivial subnormal subgroup of $G$, then $U$ is of finite index in $G$.
\end{thm}

\begin{remark}
		In what follows I shall assume that all 3-manifolds under consideration are orientable.
\end{remark}
	
	The reader will no doubt notice that my arguments run in parallel to Moon's.  Thus I shall first prove that Conjecture \ref{FibConj} is true for geometric manifolds.  Hyperbolic manifolds are handled by Theorem 1.10 in \cite{Moon}, which the reader can verify for himself.  In certain non-hyperbolic cases the result is a consequence of the corresponding statement being true for compact manifolds with the property that every subgroup of their fundamental group is finitely generated.  The remaining non-hyperbolic cases are settled by arguments using Theorem \ref{Stal} and certain facts about orbifolds proved in Section \ref{SFSs}.  The next step is a generalization of Theorems 2.4 and 2.9 in \cite{Moon} to handle a subnormal $N$ which has a composition series consisting entirely of finitely generated terms.  Finally, I conclude with the inductive argument which was not possible prior to the proof of the Geometrization Theorem.
	The reader will notice that the proof of Theorem \ref{MnSFS} is borrowed verbatim from Moon's paper \cite{Moon} - very minor changes are needed to achieve the desired generalization and I have included Moon's proof for completeness and readability.
	
\section{An Extension of Griffiths' Theorem}\label{ExtGrThm}

In this section I generalize Griffiths' Theorem \ref{Grf} to handle the case when $N$ is subnormal rather than normal.  Namely, I prove:
\begin{thm}\label{SubnGriffths}Let $S$ be a surface whose fundamental group contains a finitely generated subgroup $U$ of infinite index, which contains a non-trivial subnormal subgroup $N$ of $\pi_1(S)$.  Then, $S$ is the torus or the Klein bottle.
\end{thm}
\begin{proof}First, we note that if $S$ is an open surface or if $\partial S\neq\emptyset$, then $\pi_1(S)$ is free - see, for example, Theorem 3.3. in \cite{Griffiths}.  Then, Theorem 1.5 in \cite{Elkalla} shows that $\pi_1(S)$ must be isomorphic to $\mathbb{Z}$.  This is impossible as $\mathbb{Z}$ does not have any nontrivial infinite index subgroups.  Therefore, $S$ is a closed surface.
		Suppose that $S$ is not the torus.  If $S$ is orientable, since $\pi_1(S)$ is infinite, not abelian, and not isomorphic to $\mathfrak{P}$ or $\mathfrak{M}$, Griffiths' Theorem \ref{GriffithsSub} shows that $U$ is of finite index in $\pi_1(S)$, which is a contradiction.  Therefore, $S$ is non-orientable.  Let $S'$ be its orientable double cover.  Suppose that $S'$ is not the torus.  We have $\pi_1(S')\triangleright\pi_1(S')\cap N_k \triangleright ... \triangleright \pi_1(S')\cap N_0$.  
		As $\pi_1(S')\cap U$ is of index at most 2 in $U$, $\pi_1(S')\cap U$ is finitely generated.  Since $\pi_1(S)$ is torsion free, we have $\pi_1(S')\cap N_0\neq \{1\}$.  Griffiths' Theorem \ref{GriffithsSub} shows, then, that $\pi_1(S')\cap U$ must be of finite index in $\pi_1(S')$ and therefore also in $\pi_1(S)$.  This, however, implies that $U$ is itself of finite index in $\pi_1(S)$, which yields a contradiction.  Therefore, $S'$ must be the torus, but in this case computing the relevant Euler characteristics shows that $S$ is the Klein bottle.
\end{proof}
While this result may be of independent interest, I shall only use it in Section \ref{SFSs} to generalize Theorem 1.5 in \cite{Moon} to the case of a subnormal subgroup $N$.

\section{Geometric Manifolds}
	
\subsection{Compact Seifert Fibered Spaces}\label{SFSs}
	My goal in this section is to adapt Theorem 1.5 in \cite{Moon} to prove that a compact Seifert fibered space fibers over the circle if its fundamental group contains a finitely generated subgroup of infinite index which contains a subnormal subgroup not isomorphic to $\mathbb{Z}$.  In order to accomplish this, I will first generalize  a few results about 2-orbifolds in \cite{Moon}.
		
\begin{pro}\label{NoFinOrbSub}Suppose $X$ is a good, closed, hyperbolic 2-orbifold, then $G=\pi_1^{orb}(X)$ contains no finite subnormal subgroups.
\end{pro}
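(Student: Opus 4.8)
The plan is to use the fact that, $X$ being a good, closed, hyperbolic 2-orbifold, the group $G=\pi_1^{orb}(X)$ acts properly discontinuously, cocompactly, and by isometries on $\mathbb{H}^2$ with quotient $X$ (so $G$ is a cocompact Fuchsian group and, in particular, has no parabolic elements). Arguing by contradiction, I would assume that $N\neq\{1\}$ is a finite subnormal subgroup of $G$ and fix a subnormal series $N=N_0\triangleleft N_1\triangleleft\cdots\triangleleft N_k=G$. The goal is then to show, by induction along this series, that every $N_i$ is \emph{elementary}, that is, virtually cyclic. Since $G=N_k$ acts properly discontinuously and cocompactly on $\mathbb{H}^2$ it is quasi-isometric to $\mathbb{H}^2$, hence not virtually cyclic (equivalently, $G$ is virtually the fundamental group of a closed surface of negative Euler characteristic), and this contradiction finishes the argument.

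The engine of the induction would be a normalizer lemma: if $H\leq G$ is a nontrivial virtually cyclic subgroup, then $N_G(H)$ is virtually cyclic. To prove it I would split into two cases. If $H$ is finite, then by Cartan's fixed point theorem (a finite group of isometries of $\mathbb{H}^2$ has a fixed point) its fixed set $\mathrm{Fix}(H)\subseteq\mathbb{H}^2$ is nonempty; moreover, either $H$ contains an elliptic element, in which case $\mathrm{Fix}(H)$ is the single fixed point of that element, or every nontrivial element of $H$ is a reflection, which forces $H\cong\mathbb{Z}_2$ and $\mathrm{Fix}(H)$ to be a single geodesic $\ell$. As $N_G(H)$ preserves $\mathrm{Fix}(H)$ setwise, it lies either in the $G$-stabilizer of a point (finite, by proper discontinuity) or in $\mathrm{Stab}_G(\ell)$; and $\mathrm{Stab}_G(\ell)$ is virtually cyclic because its pointwise-fixing subgroup has order at most $2$ while the induced action on $\ell\cong\mathbb{R}$ is properly discontinuous, hence cyclic or infinite dihedral. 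If instead $H$ is infinite virtually cyclic it contains a hyperbolic element (there are no parabolics), all hyperbolic elements of $H$ share one common axis $\ell$ (two hyperbolic isometries of $\mathbb{H}^2$ with distinct axes generate a non-elementary subgroup), and every $g\in N_G(H)$ conjugates hyperbolic elements of $H$ to hyperbolic elements of $H$, hence carries $\ell$ to $\ell$; so again $N_G(H)\leq\mathrm{Stab}_G(\ell)$ is virtually cyclic.

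Granting the lemma, the induction is routine: $N_0=N$ is finite, hence nontrivial and virtually cyclic; and if $N_i$ is nontrivial and virtually cyclic with $i<k$, then $N_{i+1}$ normalizes $N_i$, so $N_{i+1}\leq N_G(N_i)$ is virtually cyclic by the lemma, and it is nontrivial since it contains $N_0$. Therefore $G=N_k$ is virtually cyclic, contradicting the description of $G$ above, and the proposition follows.

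I expect the main point requiring care to be the case of the lemma in which $\mathrm{Fix}(H)$ is a geodesic rather than a point: there $N_G(H)$ need not be finite, only virtually cyclic, so one cannot hope to collapse the subnormal series to a finite group directly — which is precisely why the inductive property must be taken to be ``elementary'' rather than ``finite''. The remaining ingredients — Cartan's fixed point theorem, the classification of elementary subgroups of a cocompact Fuchsian group, and the non-virtual-cyclicity of a closed hyperbolic 2-orbifold group — are standard and should require no new work.
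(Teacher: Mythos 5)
Your proof is correct, and while it rests on the same geometric inputs the paper uses (Cartan's fixed point theorem and the dichotomy between a point and a geodesic as the fixed set of a finite group of isometries of $\mathbb{H}^2$), the inductive packaging is genuinely different and arguably cleaner. The paper's induction pushes a \emph{finite normal subgroup} up the subnormal chain: when $\mathrm{Fix}(N_0)$ is a point it concludes $N_1$ is finite and restarts; when $\mathrm{Fix}(N_0)$ is a geodesic $\ell$ it decomposes $N_1 \cong \mathbb{Z}_2 \times H$ and must argue that either $N_1$ is finite or $N_0$ is \emph{characteristic} in $N_1$ (the unique central $\mathbb{Z}_2$), so that $N_0 \triangleleft N_2$ and the process can continue; the contradiction at the end is that $G$ would be finite or of the form $\mathbb{Z}_2 \times H$ stabilizing a line, neither of which is cocompact. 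Your invariant — ``$N_i$ is nontrivial and elementary (virtually cyclic)'' — sidesteps the characteristicity argument entirely, because it is preserved by passage to normalizers in one uniform step: $N_{i+1} \leq N_G(N_i) \leq \mathrm{Stab}_G(\mathrm{pt})$ or $\mathrm{Stab}_G(\ell)$, both of which are elementary. This trades the paper's explicit bookkeeping for a single clean normalizer lemma, and the final contradiction (a cocompact Fuchsian group is not virtually cyclic, e.g.\ by quasi-isometry to $\mathbb{H}^2$ or by containing a finite-index surface group of genus $\geq 2$) is essentially the same as the paper's. Both proofs are valid; yours is more conceptual and more obviously robust, since the inductive hypothesis never has to be "restarted" or upgraded mid-argument.
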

\begin{proof}Suppose that $N$ is a finite subnormal subgroup, so that $N=N_0\triangleleft N_1 \triangleleft ... \triangleleft N_{n-1}\triangleleft G$.  Let $X_0$ be the orbifold cover of $X$ such that $\pi_1^{orb}(X_0)=N_0$.  Then, $N_0$ acts on $\mathbb{H}$ by isometries.  By II-Corollary 2.8 in \cite{BHf}, $N_0$ has a fixed point.  Let $Fix(N_0)$ denote the subset of $\mathbb{H}$ fixed pointwise by $N_0$.  If $Fix(N_0)=\left\lbrace p_0\right\rbrace $ consists of a single point, then since $N_0$ is normal in $N_1$, $N_1$ leaves $Fix(N_0)$ invariant and $p_0$ is a fixed point for the action of $N_1$ on $\mathbb{H}$.  Then, $N_1$ is itself finite.  If $|Fix(N_0)|>1$, $Fix(N_0)$ must be a geodesic line $l$ and $N_0=\mathbb{Z}_2$ is generated by a single reflection about $l$.  In this case, since $N_0$ is normal in $N_1$, every $g\in N_1$ must leave $l$ invariant.  The line $l$ separates $\mathbb{H}$ into two half-spaces; let $H$ be the subgroup of $N_1$ which preserves them.  We see that $N_0$ is central in $N_1$ and that $N_1=N_0\times H$.  Since every $h\in H$ leaves $l$ invariant, $h$ must restrict to either a translation or reflection about a point on $l$.  Let $H_0$ be the subgroup of $H$ consisting of orientation preserving isometries of $l$; $H_0$ is a subgroup of index at most 2 in $H$.  It is easy to see that if $H$ contains an orientation reversing isometry $h$, then $h^2=1$ and $h$ does not commute with any element of $H_0$.  Thus, if $H_0$ is nontrivial, then $N_0$ is characteristic in $N_1$ as it is the unique central subgroup of order 2 of $N_1$.  Hence $N_0\triangleleft N_2$.  If $H_0$ is trivial, then $N_1$ is finite.  In either case we conclude that $N_2$ contains a finite normal subgroup.  Proceeding inductively, we conclude that $G$ must be finite or leave $l$ invariant and be isomorphic to $\mathbb{Z}_2\times H$ as above.  This is impossible as the quotient of $\mathbb{H}$ by such a group is not compact.
\end{proof}

\begin{lem}\label{SubOrbFib}Let $X$ be a good closed 2-dimensional orbifold.  If $U$ is a finitely generated, infinite index subgroup of $\pi_1^{orb}(X)$ and $U$ contains a non-trivial $N \triangleleft_s \pi_1^{orb}(X)$, then $X$ has a finite orbifold cover $X_1$ which is a $\mathbb{S}$-bundle over either $\mathbb{S}$ or the orbifold $\mathbb{S}/\mathbb{Z}_2$.
\end{lem}

\begin{proof}
	We proceed in a manner identical to Moon's arguments in \cite{Moon} but instead use Proposition \ref{NoFinOrbSub} and Theorem \ref{GriffithsSub} to reach the desired conclusion.
	
	Our hypothesis on $\pi_1^{orb(X)}$ tells us that $X$ is not a spherical 2-dimensional orbifold as these have a finite orbifold fundamental group.  Thus we shall assume that $X$ is a good, closed, 2-dimensional, hyperbolic orbifold.  Therefore $X$ has a finite cover which is a closed hyperbolic surface, see \cite{8Geom} and we conclude that $\pi_1^{orb}(X)$ contains the fundamental group $\Gamma$ of a closed surface as a finite index subgroup.  Hence $|\Gamma : \Gamma \cap U|=\infty$.  In view of Proposition \ref{NoFinOrbSub}, we must have $\Gamma \cap N \neq 1$:
	To prove this, let $Core(\Gamma) = \cap g \Gamma g^{-1}$ denote the normal core of $\Gamma$ in $\pi_1^{orb}(X)$.  We note that $|\pi_1^{orb}(X):Core(\Gamma)|<\infty$ and $Core(\Gamma) \triangleleft \pi_1^{orb}(X)$, and further that $\Gamma \cap N = 1$ would imply that $N$ would embed into the finite quotient $\pi_1^{orb}(X) / Core(\Gamma)$, thus contradicting Proposition \ref{NoFinOrbSub}.  Now Theorem \ref{GriffithsSub} implies that $|\Gamma:\Gamma\cap U|<\infty$ which is a contradiction.  Therefore $X$ must be a Euclidean orbifold and the conclusion follows.
\end{proof}

\begin{lem}\label{SubOrbBdFib}Let $X$ be a 2-dimensional compact orbifold with nonempty boundary whose singular points are cone points in $Int(X)$.  If a finitely generated subgroup $U$ of $\pi_1^{orb}(X)$ contains a nontrivial subnormal subgroup $N$ of $\pi_1^{orb}(X)$, then $U$ is of finite index in $\pi_1^{orb}(X)$. 
\end{lem}

\begin{proof}As $X$ has nonempty boundary, $\pi_1^{orb}(X)$ is a free product of cyclic groups.  Suppose by way of obtaining  contradiction that the index of $U$ in $\pi_1^{orb}(X)$ is infinite.  The hypotheses of Theorem 1.5 in \cite{Elkalla} are satisfied and we conclude that $\pi_1^{orb}(X)$ is indecomposable, hence cyclic.  That is impossible as $\mathbb{Z}$ has no nontrivial infinite index subgroups.
\end{proof}

\begin{lem}\label{CyclicExt}Suppose that $G$ is a cyclic extension of $H$: $1 \rightarrow \left\langle t\right\rangle  \rightarrow G \rightarrow H \rightarrow 1$.  Suppose that $U$ is subgroup of $G$, then the centralizer of $\left\langle t\right\rangle $ in $U$, $C_U(t)=\left\lbrace u \in U: [u,t]=1\right\rbrace $ is normal in $U$ and is of index at most 2.
\end{lem}

\begin{proof}The conjugation of $\left\langle t\right\rangle $ by elements of $G$, $g \rightarrow \psi_g$, where $\psi_g(t)=gtg^{-1}$, defines a homomorphism of $G$ to $Aut(\mathbb{Z}) \cong \mathbb{Z}/2\mathbb{Z}$.  Therefore, the kernel of this homomorphism, which is precisely $K = \left\lbrace g \in G: [g,t]=1 \right\rbrace $ is of index at most 2 in $G$.  The observation $C_U(t) = G \cap K$ concludes the proof.
\end{proof}

\begin{pro}\label{FinGenPi1}Let $M$ be a compact 3-manifold whose fundamental group $\pi_1(M)$ has the property that all of its subgroups are finitely generated.  If $\pi_1(M)$ contains a finitely generated, infinite index subgroup $U$ which contains a non-trivial subgroup $N \neq \mathbb{Z}$ subnormal in $\pi_1(M)$, then a finite cover of $M$ fibers over $\mathbb{S}$ with fiber a compact surface $F$, and $\pi_1(F)$ is commensurable with $U$.
\end{pro}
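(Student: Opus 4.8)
The plan is to remove the subnormality by an induction on the length of a subnormal series, at each stage either shortening the series by passing to a finite cover or invoking Theorem~\ref{HmpJc}, and then to identify the commensurability class of the resulting fiber with that of $U$ by means of the extended Griffiths Theorem~\ref{SubnGriffths}. First I would record what the hypothesis buys: since every subgroup of $G=\pi_1(M)$ is finitely generated, the subgroup $N$ and every term of a chosen subnormal series $N=N_0\triangleleft N_1\triangleleft\cdots\triangleleft N_n=G$ is finitely generated, hence finitely presented by \cite{ScottCoh}; since $N\subseteq U$ with $|G:U|=\infty$ we have $|G:N|=\infty$, so $|G:N_i|=\infty$ and $N_i\not\cong\mathbb Z$ for every $i\geq 1$; and a short argument (using that $\pi_1$ of an irreducible manifold with infinite fundamental group is torsion free, that irreducible manifolds with finite fundamental group have no infinite-index subgroups, and that a nontrivial free product has no nontrivial finite subnormal subgroup) shows that $N$ is infinite. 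Finally we may assume $\partial M$ has no $2$-sphere components, so that $\hat M=M$.

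The induction runs on $n$. Consider the normal subgroup $N_{n-1}$. \emph{If $|G:N_{n-1}|<\infty$}, pass to the finite cover $M_{n-1}$ with $\pi_1(M_{n-1})=N_{n-1}$: it again has the property that all subgroups of its fundamental group are finitely generated, $U\cap N_{n-1}$ is a finitely generated subgroup of infinite index in $N_{n-1}$ containing $N$, and $N$ is subnormal in $N_{n-1}$ via a series of length $n-1$; the inductive hypothesis then yields a finite cover of $M_{n-1}$, hence of $M$, fibering over $\mathbb S$ with fiber $F$ such that $\pi_1(F)$ is commensurable with $U\cap N_{n-1}$, and therefore with $U$. \emph{If $|G:N_{n-1}|=\infty$} (this includes the base case $n=1$, where $N_{n-1}=N$ and $|G:N|=\infty$), then $N_{n-1}$ is a finitely presented, nontrivial, normal subgroup of $G$ with $N_{n-1}\not\cong\mathbb Z$ and infinite quotient, and $M$, being orientable, contains no $2$-sided projective plane; so Theorem~\ref{HmpJc}, together with the Poincar\'e Conjecture, shows that $M$ is either (i) a surface bundle over $\mathbb S$, or (ii) a union of two twisted $I$-bundles over a compact surface, and in case (ii) one passes to the index-two subgroup corresponding to a genuine surface bundle. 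Thus a finite cover $M'$ of $M$ fibers over $\mathbb S$ with fiber a compact surface $F$, and (the relevant intersection of) $N_{n-1}$ is of finite index in $S:=\pi_1(F)$. The recursion terminates because the indices $|N_i:N_{i-1}|$ cannot all be finite, which would force $|G:N|<\infty$.

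It remains to upgrade ``$S$ commensurable with $N_{n-1}$'' to ``$S$ commensurable with $U$'', and this is the step I expect to be the main obstacle, as it is where Theorem~\ref{SubnGriffths} enters and where one must dispose of the low-complexity cases. Write $G'=\pi_1(M')$, let $\phi\colon G'\twoheadrightarrow G'/S\cong\mathbb Z$, replace $U$ and $N$ by their intersections with $G'$, and set $S'':=N_{n-1}\cap G'$, a finite-index subgroup of $S$ (so $S''\leq\ker\phi=S$), again the fundamental group of a compact surface, and normal in $G'$; intersecting the subnormal series of $N$ with $G'$ exhibits $N$ as a nontrivial subnormal subgroup of $S''$ contained in the finitely generated group $U\cap S''$. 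If $|S'':U\cap S''|=\infty$, then Theorem~\ref{SubnGriffths} forces the surface with group $S''$, and hence $F$, to be the torus or the Klein bottle, so $M'$, and hence $M$, is aspherical and $\pi_1(M)$ is torsion free, while $U\cap S''$, being a finitely generated infinite-index subgroup of a virtually-$\mathbb Z^2$ group, is virtually cyclic; then $N$ is a nontrivial, torsion-free, virtually cyclic group, i.e.\ $N\cong\mathbb Z$, contradicting the hypothesis. Hence $|S'':U\cap S''|<\infty$, so $U\cap S''$, and therefore $U\cap S$, has finite index in $S$; the index identity $|G':U|=|\mathbb Z:\phi(U)|\cdot|S:U\cap S|$, with $|G':U|=\infty$ and $|S:U\cap S|<\infty$, then forces $\phi(U)=0$, i.e.\ $U\leq S$. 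Finally $U\cap S''$ has finite index in both $U$ and $S$, so $U$ and $S=\pi_1(F)$ are commensurable, completing the induction and the proof. The points requiring care are the verification of the hypotheses of Theorem~\ref{HmpJc}, the reduction to a genuine surface bundle in case (ii), the bookkeeping that intersection with $G'$ preserves the nontriviality of $N$ (which rests on the torsion-freeness noted above), and the small-fiber cases $F=\mathbb S^2,\mathbb{RP}^2$, which contradict the existence of $U$.
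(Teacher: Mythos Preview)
Your proof is correct and follows essentially the same strategy as the paper: locate a term of the subnormal series that is normal of infinite index in a finite-index overgroup, apply Hempel--Jaco (Theorem~\ref{HmpJc}) there, and then use the extended Griffiths theorem (Theorem~\ref{SubnGriffths}) to rule out the low-genus fibers and force commensurability of $U$ with $\pi_1(F)$. The only difference is cosmetic: the paper jumps directly to the largest index $i_0$ with $|G:N_{i_0}|=\infty$ and works with $N_{i_0}\triangleleft N_{i_0+1}$, whereas you peel off finite-index top terms one at a time by induction; your final index computation $|G':U|=|\mathbb Z:\phi(U)|\cdot|S:U\cap S|$ is exactly the unpacking of the paper's terse sentence ``$\pi_1(M)$ is virtually an extension of $\pi_1(F)$ by $\mathbb Z$, hence \ldots''. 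One harmless slip: in your preliminaries you assert $|G:N_i|=\infty$ for every $i\geq 1$, which is false (e.g.\ $N_n=G$), but you never use this and immediately split into the two cases $|G:N_{n-1}|<\infty$ and $|G:N_{n-1}|=\infty$ anyway.
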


\begin{proof}Let $N=N_0 \triangleleft N_1 \triangleleft ... \triangleleft N_{k-1} \triangleleft N_k=\pi_1(M)$.  By assumption, each $N_i$ is finitely generated.  Let $i_0$ be the largest index such that $|\pi_1(M) : N_{i_0}| = \infty$ and $|\pi_1(M) : N_{j}| < \infty$, for all $j>i_0$.  Then, $N_{i_0}$ is a normal infinite index subgroup of $N_{i_0+1}$.  Note that since $M$ is compact, Theorem 2.1 in \cite{ScottCoh} shows that $N_{i_0}$ is finitely presented.  Let $M_{N_{i_0+1}}$ be the finite cover of $M$ whose fundamental group is $N_{i_0+1}$.  We now have $1 \rightarrow N_{i_0} \rightarrow \pi_1(M_{N_{i_0+1}}) \rightarrow Q \rightarrow 1$, where $N_{i_0}$ is finitely presented and $Q$ infinite.  Applying Theorem 3 of Hempel and Jaco in \cite{HempelJaco}, we conclude that $M_{N_{i_0+1}}$ has a finite cover which is a bundle over $\mathbb{S}$ with fiber a compact surface $F$ and that $N_{i_0}$ is subgroup of finite index in $\pi_1(F)$.  
		
	Next, we show that $|\pi_1(F):N_0|<\infty$.  Consider the finite cover $F_{N_{i_0}}$ of $F$ whose fundamental group is $N_{i_0}$.  Suppose that $|N_{i_0}:N_0|=\infty$, then Theorem \ref{SubnGriffths} implies that $F_{N_{i_0}}$ is the torus or a Klein bottle.  If $F_{N_{i_0}}$ is the torus, $N_{i_0}=\mathbb{Z}^2$ and if $F_{N_{i_0}}$ is the Klein bottle $N_{i_0}=\left\langle a,b:aba^{-1}b=1\right\rangle \cong\mathbb{Z} \rtimes\mathbb{Z}$. In either case, any subgroup of $N_{i_0}$ is either trivial, isomorphic to $\mathbb{Z}$, or of finite index in $N_{i_0}$.  This is a contradiction since the hypothesis on $N_0$ rules out the first two possibilities and we argued assuming $|N_{i_0}:N_0|=\infty$.  Hence we must have $|N_{i_0}:N_0|<\infty$ and therefore $|\pi_1(F):N_0|<\infty$.
		
	Now $\pi_1(M)$ is virtually an extension of $\pi_1(F)$ by $\mathbb{Z}$, hence $|\pi_1(M):U|=\infty$, $|\pi_1(F):N_0|<\infty$, and $N_0<U$ together imply that $|U:N_0|<\infty$ showing that $U$ is commensurable with $\pi_1(F)$ as desired.
\end{proof}
The following theorem was first proved in \cite{Moon} in the context of $N$ being a normal subgroup of $\pi_1(M)$, the proof given below is a modification of the proof therein.

\begin{thm}\label{MnSFS}Let $Y$ be a compact Seifert fibered space and let $U$ be a finitely generated, infinite index subgroup of $\pi_1(Y)$.  Suppose, further, that $U$ contains a non-trivial subnormal subgroup $N \neq \mathbb{Z}$ of $\pi_1(Y)$. Then, $Y$ is finitely covered by a compact 3-manifold $Y_1$, which is a bundle over $\mathbb{S}^1$ with fiber a compact surface $F$, and $\pi_1(F)$ is commensurable with $U$.
\end{thm}

\begin{proof}
	The fundamental group of a Seifert fibered space fits into the short exact sequence of: $1 \rightarrow \left\langle t\right\rangle  \rightarrow \pi_1(Y) \xrightarrow[]{\phi} \pi_1^{orb}(X) \rightarrow 1$, where $\left\langle t\right\rangle$ is the cyclic group generated by a regular fiber.  The proof, as in \cite{Moon} proceeds in two cases:
	
\paragraph{\textit{Case 1:}}  The orbifold $X$ is orbifold covered by an orientable surface other than the torus.

\medskip

	First, we show that $|\pi_1(Y) : \phi^{-1}(\phi(U))| < \infty$.  Since $N \neq \mathbb{Z}$, $\pi_1^{orb}(X) \triangleright_s \phi(N) \neq 1$.  Since we also have $\phi(N) < \phi(U) < \pi_1^{orb}(X)$, and $\phi(U)$ is finitely generated, we can apply Lemma \ref{SubOrbFib} and Lemma \ref{SubOrbBdFib} to conclude that $|\pi_1^{orb}(X) : \phi(U)| < \infty$.  Therefore, $|\pi_1(Y) : \phi^{-1}(\phi(U))| < \infty$ as desired.
	Next, we show that $\phi|_{U}$ is a monomorphism.  If some power of $t$ is in $U$, then $U$ will be of finite index in $\phi^{-1}(\phi(U))$.  Since $\phi^{-1}(\phi(U))$ was shown to be of finite index in $\pi_1(Y)$, we conclude that $U$ must be of finite index in $\pi_1(Y)$ contrary to the assumptions in the statement of the theorem.  Now, since $\phi|_{U}$ is a monomorphism, $\phi(U)$ is torsion free and therefore the fundamental group of a compact surface.
	 Let $C_U(t)=\left\lbrace u \in U: [u,t]=1 \right\rbrace $, and let $G$ be the subgroup of $\pi_1(Y)$ generated by $C_U(t)$ and $t$.  Note that $C_U(t) \triangleleft U$ and that the index of $C_U(t)$ in $U$ is at most 2 by Lemma \ref{CyclicExt}.  Hence $|\phi^{-1}(\phi(U)):G| \leq 2$.  Thus $G$ is a finite index subgroup of $\pi_1(Y)$.  Now, take the cover of $M_1$ of $M$ corresponding to $G \leq \pi_1(Y)$.  Since $G = \left\langle C_U(t),t\right\rangle $ with $C_U(t) \cap \left\langle t\right\rangle  = 1$, we conclude that $G \cong C_U(t) \times \mathbb{Z}$ and also that $M_1 \cong F \times \mathbb{S}$, were $F$ is a compact surface whose fundamental group is isomorphic to $C_U(t)$.  This concludes Case 1.
	 
\paragraph{\textit{Case 2}:}  The orbifold $X$ is orbifold covered by the torus.

\medskip
	 
	 Let $\Gamma \cong \mathbb{Z}^2$ be the fundamental group of the torus which orbifold covers $X$.  We proceeding as in Moon \cite{Moon}.  In view of Proposition \ref{FinGenPi1}, we only need to show that every subgroup $H$ of $\pi_1(Y)$ is finitely generated.
	 
	 We have the short exact sequence $1 \rightarrow \left\langle t\right\rangle  \cap H \rightarrow H \rightarrow \phi(H) \rightarrow 1$.  Since $\Gamma$ is of finite index in $\pi_1^{orb}(X)$, $\phi(H) \cap \Gamma$ is of finite index in $\phi(H)$.  The intersection $\phi(H) \cap \Gamma$ is finitely generated as it is a subgroup of $\Gamma$, hence $\phi(H)$ is finitely generated.  Because the kernel of $\phi|_H$ is a subgroup of $\mathbb{Z}$ and is therefore trivially finitely generated, we conclude that $H$ is itself finitely generated as needed.
\end{proof}

\subsection{Non-SFS Geometric Manifolds}\label{NonSFSs}
		
	We now handle the remaining two types of geometric manifolds which are not Seifert fibered spaces in manner analogous to the proofs in \cite{Moon}.
	
\begin{thm}\label{Sol}Let $M$ be a closed \textit{Sol} manifold, such that $\pi_1(M)$ contains a finitely generated, infinite index subgroup $U$ which contains a non-trivial subgroup $N \neq \mathbb{Z}$ subnormal in $\pi_(M)$.  Then, a finite cover of $M$ is a fiber bundle over $\mathbb{S}$ whose fiber is a compact surface $F$ and $\pi_1(F)$ is commensurable with $U$. 
\end{thm}

\begin{proof}The fundamental group $\pi_1(M)$ of a closed \textit{Sol} manifold satisfies
$$1 \rightarrow \mathbb{Z}^2 \rightarrow \pi_1(M) \overset{\phi}{\rightarrow} \mathbb{Z} \rightarrow 1.$$
Suppose $H$ is a subgroup of $\pi_1(M)$, then we have $1 \rightarrow \mathbb{Z}^2 \cap H \rightarrow H \rightarrow \phi(H) \rightarrow 1$.  Since $\mathbb{Z}^2 \cap H$ and $\phi(H)$ are finitely generated, $H$ is also finitely generated.  Now, Proposition \ref{FinGenPi1} yields the desired conclusion.
\end{proof}

\begin{thm}\textbf{(Moon, Theorem 1.10 \cite{Moon}, 2005)}\label{Hypb}Let $M$ be a complete hyperbolic manifold of finite volume whose fundamental group $G$ contains a finitely generated subgroup of infinite index $U$ which contains a non-trivial subgroup $N$ subnormal in $G$.  Then, $M$ has a finite covering space $M_1$ which is a bundle over $\mathbb{S}$ with fiber a compact surface $F$, and $\pi_1(F)$ is a subgroup of finite index in $U$.
\end{thm}\label{Hyp}

\begin{proof}The proof in \cite{Moon} generalizes verbatim to the case when $N$ is subnormal in $G$.
\end{proof}
	The above results are summarized in:
\begin{thm}\label{geomsfiber}Let $M$ be a compact geometric manifold and let $G=\pi_1(M)$.  Suppose that $U$ is a finitely generated subgroup of $G$ with $|G:U|=\infty$, and suppose that $U$ contains a subnormal subgroup $N \triangleleft_s G$.  If $N$ is not infinite cyclic, then $M$ is finitely covered by a bundle over $\mathbb{S}$ with fiber a compact surface $F$ such that $\pi_1(F)$ is commensurable with $U$. 
\end{thm}

\section{Torus sums}\label{TorusSums}

We now consider manifolds which split along an incompressible torus $\mathcal{T}$.  In these cases the fundamental group $G$ of $M$ splits as a free product with amalgamation over the group carried by the splitting torus $\mathcal{T}$.  Whenever one has such a splitting, one has an action of $G$ on a tree with quotient an edge, which is called the Bass-Serre tree of $G$.  Our proof, as in \cite{Moon}, splits in two cases.  In the first case we consider, the quotient of the Bass-Serre tree of $G$ corresponding to the splitting of $M$ by the action of $U$ is a graph of groups of infinite diameter, and in the second case - a graph of finite diameter.  The proof of Theorem 2.9 in \cite{Moon} is easily seen to handle a subnormal $N$ in the case of a finite diameter quotient.  Thus, our efforts are focused on generalizing Theorem 2.4 in \cite{Moon}.

We begin with a simple lemma showing that if $N$ is a nontrivial subnormal subgroup of a 3-manifold group not isomorphic to $\mathbb{Z}$, then all finitely generated terms of its subnormal series appear to the right of the terms which are not finitely generated:

\begin{lem}\label{Cutoff}Let $G$ be a finitely generated 3-manifold group.  Let $N=N_0 \triangleleft N_1 \triangleleft ... \triangleleft N_{n-1} \triangleleft N_n=G$ be a subnormal subgroup of $G$ such that $N \neq \left\lbrace 1\right\rbrace $ and $N \neq \mathbb{Z}$.  Then there is an index $0 \leq i_0 \leq n$, such that $N_i$ is finitely generated for all $i \geq i_0$ and no $N_i$ is finitely generated for any $i<i_0$.
\end{lem}

	\begin{proof}Suppose, for the purpose of obtaining a contradiction that there exists an occurrence of an "inversion": $N_{i_0} \triangleleft N_{i_0+1}$ with $N_{i_0}$ finitely generated while $N_{i_0+1}$ not finitely generated.  Since every subgroup of a 3-manifold group is obviously itself a 3-manifold group, we can apply Proposition 2.2 in \cite{Elkalla} to conclude that $N_{i_0}$ must be isomorphic to $\mathbb{Z}$, and thus $N_0=\mathbb{Z}$ contrary to assumption.  Therefore, such an inversion is not possible and the conclusion follows.
	\end{proof}

The following results about Bass-Serre trees will allow us to apply Theorem 2.4 in \cite{Moon} to the graph of groups corresponding to finite covers of $M$.  The hypothesis of Theorem 2.4 requires a splitting of $M$ along an incompressible torus and further assumes that the quotient of the Bass-Serre tree of the corresponding splitting of $\pi_1(M)$ by the action of $U$ is a graph of infinite diameter.  The arguments below show that the fundamental group of any finite cover of $M$ will also have this property.

For completeness, I include a proof of the following standard fact which is often left as an exercise in expository texts.  I have borrowed it from Henry Wilton's unpublished notes titled \textit{Group actions on trees}:

\begin{lem}\label{ProdElliptLox}Let $\chi$ be a tree and let $\alpha, \beta \in Aut(\chi)$ be a pair of elliptic automorphisms.  If $Fix(\alpha) \cap Fix(\beta)=\emptyset$, then $\alpha\beta$ is a hyperbolic element of $Aut(\chi)$.
\end{lem}

\begin{proof}It suffices to construct an axis for $\alpha\beta$ on which it acts by translation, which we now do.
	
	First, note that $Fix(\alpha)$ and $Fix(\beta)$ are closed subtrees of $\chi$.  Let $x \in Fix(\alpha)$ be the unique point in $\chi$ closest to $Fix(\beta)$ and $y \in Fix(\beta)$ be the unique point in $\chi$ closest to $Fix(\alpha)$.  Then, the unique geodesic from $y$ to $\alpha\beta\cdot y$ is $[x,y] \cup \alpha\cdot [x,y]$ since there are no points in the interior of $[x,y]$ fixed by $\alpha$ so that the concatenation of the geodesic segments above is a geodesic segment.  Hence, $d(y,\alpha\beta\cdot y) = d(y, \alpha\cdot y) = 2d(x,y)$.  Similarly, the geodesic from $y$ to $(\alpha\beta)^2\cdot y$ is $[x,y]\cup\alpha\cdot[x,y]\cup\alpha\beta\cdot [x,y]\cup\alpha\beta\alpha\cdot [x,y]$, so that $d(y,(\alpha\beta)^2\cdot y)=4d(x,y)=2d(y,\alpha\beta\cdot y)$ thus establishing the existence of an axis for $\alpha\beta$, which finishes the proof.
\end{proof}

\begin{pro}\label{HBassSerre}Suppose $G$ is direct product with amalgamation $G = A *_C B$, with $G \neq A$, and $G \neq B$, or $G = A *_{C}$, with $A \neq C$, and suppose $\chi$ is the Bass-Serre tree for $G$.  Then, if $H \leq G$ is a subgroup of finite index in $G$, the action of $H$ on $\chi$ is minimal and $\chi$ is the Bass-Serre tree for a graph of groups whose fundamental group is $H$.
\end{pro}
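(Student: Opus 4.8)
The plan is to prove the two assertions in turn, the second being essentially automatic once the first is established. Recall first that, because the splitting hypotheses ($A\neq G\neq B$ in the amalgamated case, $A\neq C$ in the HNN case) make the decomposition nontrivial, $G$ acts on its Bass--Serre tree $\chi$ minimally and without inversion; in particular $\chi$ is the unique minimal $G$-invariant subtree. The crux is to show that the restricted action of the finite-index subgroup $H$ is still minimal; granting this, $H$ acts on $\chi$ without inversion (inherited from $G$), so the Fundamental Theorem of Bass--Serre theory presents $H$ as the fundamental group of the quotient graph of groups $\chi/\!\!/H$ with $\chi$ its Bass--Serre tree, which is exactly the second assertion.

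To prepare for the first assertion I would exhibit hyperbolic elements of $G$ whose axes cover $\chi$. In the amalgamated case $G=A*_CB$, choose $a\in A\setminus C$ and $b\in B\setminus C$, with fixed vertices $v_A,v_B$ joined by the fundamental edge $e_0$. If $Fix(a)\cap Fix(b)\neq\emptyset$, then, subtrees being convex, the midpoint of $e_0$ would lie in $Fix(a)\cup Fix(b)$; but (the action being without inversion) fixing the midpoint of $e_0$ forces $a$, resp.\ $b$, to fix $e_0$ pointwise, hence to fix $v_B$, resp.\ $v_A$, hence to lie in $A\cap B=C$ --- a contradiction either way. Thus $Fix(a)\cap Fix(b)=\emptyset$, and Lemma \ref{ProdElliptLox} shows that $g_0:=ab$ is hyperbolic; moreover the bridge joining $Fix(a)$ to $Fix(b)$ is exactly $e_0$, so the axis of $g_0$ passes through $e_0$. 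Since $G$ acts transitively on the edges of $\chi$ (the quotient being a single edge), every edge $g'e_0$ lies on the axis of the hyperbolic element $g'g_0g'^{-1}$; hence the axes of the hyperbolic elements of $G$ cover $\chi$. The HNN case is entirely analogous, with the stable letter playing the role of $g_0$.

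Now pass to $H$. If $g\in G$ is hyperbolic it has infinite order, so $\langle g\rangle\cap H=\langle g^{k}\rangle$ for some $k\geq 1$, and $g^{k}$ is again hyperbolic with the same axis as $g$. In particular $H$ contains a hyperbolic element, so it has a nonempty minimal invariant subtree $\chi_H$, namely the union of the axes of the hyperbolic elements of $H$; and by the previous paragraph $\chi_H$ contains the axis of every hyperbolic element of $G$, whence $\chi_H=\chi$. Thus the $H$-action on $\chi$ is minimal, and the reduction in the first paragraph finishes the proof. As a bookkeeping remark, the underlying graph $\chi/H$ is finite because $\chi/G$ is a single edge and $[G:H]<\infty$, and the edge and vertex groups of $\chi/\!\!/H$ are the intersections of $H$ with the $G$-conjugates of $C$ and of $A,B$ (resp.\ of $C$ and of $A$); minimality is exactly what guarantees that this graph of groups is a genuine (reduced) splitting, which is what is needed when it is fed into Theorem 2.4 of \cite{Moon}.

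The only real content is the minimality of the $H$-action, and the device above --- covering each axis of a hyperbolic element of $G$ by the axis of a suitable power lying in $H$ --- is the cleanest route to it. One might instead try to argue only from the fact that the $G$-translates of a minimal $H$-invariant subtree $Y$ form a $G$-invariant subtree, hence exhaust $\chi$, and to conclude $Y=\chi$ from $\chi=\bigcup_{i=1}^{[G:H]}g_iY$; but that last deduction is delicate --- a line is already a finite union of proper subtrees --- so I would avoid it, the axis argument bypassing the difficulty.
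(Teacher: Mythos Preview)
Your proof is correct and follows essentially the same route as the paper's: produce a hyperbolic element of $G$ via Lemma~\ref{ProdElliptLox} from two elliptics with disjoint fixed-point sets, use edge-transitivity to cover $\chi$ by axes of its conjugates, and then observe that a suitable power of each hyperbolic element lies in $H$ and retains the same axis, forcing the minimal $H$-subtree to be all of $\chi$. The only cosmetic differences are your midpoint phrasing for the disjointness of $Fix(a)$ and $Fix(b)$ (the paper argues directly that one of the two geodesics to a putative common fixed point must cross the fundamental edge) and your use of $\langle g\rangle\cap H$ in place of the paper's pigeonhole on cosets; these are equivalent.
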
	

	\begin{proof}Since $G$ acts simplicially on $\chi$, $G$ acts by isometries on the CAT(0) space $\chi$.  Therefore, every $g \in G$ acts by an elliptic or hyperbolic isometry according to whether $g$ fixes a vertex or realizes a non-zero minimal translation distance $g_{min} = \inf\{d(x,g \cdot x): x \in \chi\}$.  In the latter case, $g$ leaves a subspace of $\chi$ isometric to $\mathbb{R}$ invariant, and acts on this subspace, called an axis for $g$, as translation by a $g_{min}$.  See \cite{BHf} for an account of the classification of the isometries of a CAT(0) space.
		
		First, we show that there exists an element $g \in G$ which acts on $\chi$ as a hyperbolic isometry - such an isometry is sometimes called loxodromic in the context of group actions on trees.  Let $g_1 \in A-C$ and $g_2 \in B-C$; such elements can clearly be found as $G \neq A$ and $G \neq B$.  Every automorphism of $\chi$ is either elliptic or hyperbolic, so for $i=1,2$, $g_i$ is either elliptic or hyperbolic.  If one of these is hyperbolic, we are done.  If $g_1$ and $g_2$ are both elliptic, then they stabilize two adjacent vertices $v_1$ and $v_2$, respectively.  Now, suppose that $g_1$ and $g_2$ both fix a point $p \in \chi$; we can easily see that $g_1$ fixes the geodesic segment $\left[ v_1,p\right] $ and similarly that $g_2$ fixes $\left[ v_2,p\right] $.  Because every edge of $\chi$ disconnects $\chi$, at least one of $g_1$ and $g_2$ fixes the unique edge connecting the two vertices $e(v_1,v_2)$, and therefore is in $C$.  This is a contradiction and therefore $Fix(g_1)\cap Fix(g_2)=\emptyset$, hence $g_1g_2$ is a hyperbolic isometry by Lemma \ref{ProdElliptLox}.  The case of an HNN extension is similar.
		
		Next, we show that every point of $\chi$ lies on the axis for some hyperbolic isometry $g \in G$.  Suppose that $g_0 \in G$ is an element which acts as a hyperbolic isometry on $\chi$, whose axis is $\gamma_0$.  Suppose $e_0$ is any edge of $\gamma_0$.  Since $g \cdot \gamma_0$ is an axis for $gg_0g^{-1}$, it follows that $g \cdot e_0$ is contained in the axis for $gg_0g^{-1}$.  But the action of $G$ is transitive on the set of edges of $\chi$, therefore $\chi = \bigcup \limits_{g \in G}^{} g \cdot \gamma_0$.  However, the union on the right-hand side is exactly the union of the axes for the elements of $G$ which are conjugates of $g_0$, which proves our first claim.
		
		Finally, we show that every axis $\gamma$ for a hyperbolic element $g \in G$ is an axis for some $h \in H$ as follows:  Consider the cosets $H, gH, g^2H, ... , g^nH$, where $|G:H|=n$.  These cannot be all distinct, therefore, we conclude that $g^i \in H$ for some $1 \leq i \leq n$.  However, $\gamma$  is an axis for $g^i$ as well, hence our second claim has been established.  Now, we see that $\chi$ is a union of the axes for the hyperbolic elements of $H$.  From this we deduce that every edge of $\chi$ lies on an axis of a hyperbolic element of $H$ and therefore $H$ cannot leave invariant any subtree of $\chi$.  This shows that the action of $H$ on $\chi$ is minimal and that $\chi$ is, therefore, the Bass-Serre tree for $H$.
	\end{proof}

\begin{lem}\label{HNNsdp}Let $M$ be a compact 3-manifold such that $\pi_1(M)=X*_{\mathcal{T}}$, where $\mathcal{T}$ is an incompressible torus, so that $G = A*_{C}$, where $G=\pi_1(M)$ and $A = C = \mathbb{Z}^2$.  Suppose that $U$ is a finitely generated subgroup of $G$ which contains a nontrivial subnormal subgroup $N\neq\mathbb{Z}$, then $M$ has a finite cover which is a bundle over $\mathbb{S}$ with fiber a torus, and $\pi_1(T)$ is commensurable with $U$.
\end{lem}

	\begin{proof}
		By assumption, $G$ satisfies $1\rightarrow \mathbb{Z}^2 \rightarrow G \rightarrow \mathbb{Z} \rightarrow 1$, so by Theorem \ref{Stal}, $M$ is a torus bundle over $\mathbb{S}$.  However, torus bundles over the circle are geometric and the conclusion follows from Theorem \ref{geomsfiber}.
	\end{proof}

Given a graph of groups $\Gamma$, we can form a different graph of groups $\Gamma'$ by collapsing an edge in $\Gamma$ to a single vertex with vertex group $G_{v_1}*_{G_e} G_{v_2}$, or $G_{v}*_{G_e}$ if the edge collapsed joins a single vertex $v$.  To obtain a graph of groups structure, we define the inclusion maps of all the edges meeting the new vertex in $\Gamma'$ to be the compositions of the injections indicated by $\Gamma$ followed by the canonical inclusion of $G_i \hookrightarrow G_{v_1}*_{G_e} G_{v_2}$ for $i=1,2$, or $G_v \hookrightarrow G_{v}*_{G_e}$ as appropriate.  It is obvious that $\pi_1(\Gamma) \cong \pi_1(\Gamma')$.  If $\Gamma$ is a finite graph of groups, given an edge $e \in Edge(\Gamma)$, we can proceed to collapse all the edges of $\Gamma$ except for $e$ in this way to obtain a new graph of groups $\Gamma_e$ which has a single edge.  Thus, $\Gamma_e$ gives the structure of a free product with amalgamation or an HNN extension to $G$.  We shall call this process of obtaining $\Gamma_e$ from $\Gamma$ collapsing \textit{around} $e$. 

\begin{pro}\label{InfDiam}Let $\Gamma$ be a finite graph of groups with fundamental group $\pi_1(\Gamma)=G$.  Let $T$ be the Bass-Serre tree of $\Gamma$ and let $U$ be a finitely generated subgroup of $G$ such that $U\backslash T$ has infinite diameter.  Then, for some edge $e \in Edge(\Gamma)$ the quotient of the Bass-Serre tree of the graph $\Gamma_e$ by $U$ has infinite diameter.
\end{pro}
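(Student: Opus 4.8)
The plan is to work directly with the Bass--Serre tree $T$ of $\Gamma$ and the collapse maps it carries, reducing the computation of diameters to a bounded ``core''. For an edge $e$ of $\Gamma$ let $T_e$ be the Bass--Serre tree of $\Gamma_e$; collapsing around $e$ corresponds, on the level of trees, to the $G$-equivariant map $\pi_e\colon T\to T_e$ that contracts every edge of $T$ lying over an edge of $\Gamma$ different from $e$, and since $U\le G$ preserves the type of an edge of $T$ (its image in $G\backslash T=\Gamma$), each $\pi_e$ is $U$-equivariant. The first, routine, ingredient is that contracting a subforest of a tree sends geodesics to geodesics; hence for vertices $x,y$ of $T$ the distance $d_{T_e}(\pi_e x,\pi_e y)$ is exactly the number of edges of type $e$ on the $T$-geodesic $[x,y]$, so $d_T(x,y)=\sum_e d_{T_e}(\pi_e x,\pi_e y)$, and all of these edge counts are unchanged when one applies any element of $U$ to the pair $(x,y)$.

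Since $U$ is finitely generated, it is standard that $T$ contains a $U$-invariant subtree $T_U$ with $U\backslash T_U$ a finite graph (the minimal invariant subtree when $U$ has a hyperbolic element, a single $U$-fixed vertex otherwise). Fix a vertex $p_0\in T_U$ and set $C=\mathrm{diam}(U\backslash T_U)<\infty$. As $\mathrm{diam}(U\backslash T)=\infty$ forces $\sup_q d_{U\backslash T}(\bar p_0,\bar q)=\infty$, for every $N$ there is a vertex $q_N$ with $d_{U\backslash T}(\bar p_0,\bar q_N)\ge N$; for $N>C$, convexity and $U$-invariance of $T_U$ show $q_N\notin T_U$, so $q_N$ lies in a component $B_N$ of $T\setminus T_U$, and such a component is joined to $T_U$ by a single edge $\epsilon_N$ from $a_N\in T_U$ to $a_N'\in B_N$. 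Using $\mathrm{diam}(U\backslash T_U)=C$ and the $U$-invariance of $T_U$, I would replace $q_N$ by a $U$-translate so that $d_T(p_0,a_N)\le C$; since every path from $p_0$ to $q_N$ runs through $a_N$, this gives $d_T(p_0,q_N)\ge N$ and $d_{B_N}(a_N',q_N)\ge N-C-1$. The one structural fact needed is that $\mathrm{Stab}_U(B_N)$ fixes $a_N'$: an element of $U$ preserving $B_N$ preserves $T_U$ as well, hence carries the unique connecting edge $\epsilon_N$ to itself.

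A pigeonhole on edge types then finishes the proof. The geodesic $[a_N',q_N]\subseteq B_N$ has length at least $N-C-1$, so some type $e(N)$ occurs on it at least $(N-C-1)/k$ times, $k$ being the number of edges of $\Gamma$; pass to a subsequence with $e(N)\equiv e_0$. For \emph{every} $u\in U$ the geodesic $[a_N',uq_N]$ still carries at least $(N-C-1)/k$ edges of type $e_0$: if $u\in\mathrm{Stab}_U(B_N)$ this holds because $u$ fixes $a_N'$ and preserves types, so $[a_N',uq_N]=u\cdot[a_N',q_N]$; if $u\notin\mathrm{Stab}_U(B_N)$ then $uq_N$ lies in the distinct component $uB_N$, so the geodesic from $a_N'$ to $uq_N$ must leave $B_N$ through $a_N$, cross $T_U$, and re-enter $uB_N$ through $ua_N$, and therefore contains the terminal segment $u\cdot[a_N',q_N]$. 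Hence $d_{U\backslash T_{e_0}}(\overline{\pi_{e_0}a_N'},\overline{\pi_{e_0}q_N})\ge(N-C-1)/k\to\infty$, so $U\backslash T_{e_0}$ has infinite diameter and $e=e_0$ works.

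I expect the genuine difficulty to be conceptual rather than computational: the naive attempt via $\mathrm{diam}(U\backslash T)=\sup_{x,y}\inf_u\sum_e d_{T_e}(\pi_e x,u\pi_e y)$ fails because an infimum over $U$ does not commute with the sum over edge types, so a single far-away vertex only shows that some type is used heavily along one geodesic, not that the corresponding $U$-orbit is spread out in $T_{e_0}$. Passing to the bounded core $T_U$ and using that $\mathrm{Stab}_U(B_N)$ fixes $a_N'$ is precisely what removes this slack: once the far vertex is pushed into a hanging component whose root is $U$-stabilized, no element of $U$ can reduce the type-$e_0$ count. In the write-up the one step needing care is the case analysis on whether $u$ preserves $B_N$, i.e.\ verifying that $[a_N',uq_N]$ always contains a $u$-translate of $[a_N',q_N]$.
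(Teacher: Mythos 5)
Your proof is correct, but it takes a genuinely different route from the paper's. The paper argues by contradiction: it supposes $U\backslash Q_e$ has finite diameter for every $e$, invokes Lemma~2.1 of Moon (which says that for a one-edge splitting, a finitely generated subgroup with finite-diameter quotient in fact has \emph{finite} quotient graph), notes that edges of $Q_e$ biject with $U$-orbits of edges of $T$ lying over $e$, and then, since $\mathrm{Edge}(\Gamma)$ is finite, concludes that $U\backslash T$ would be a finite graph --- contradicting its infinite diameter. Your argument is direct and self-contained: you re-derive the relevant consequence of finite generation yourself (the existence of a $U$-invariant subtree $T_U$ with finite quotient), push far-away vertices $q_N$ into hanging components $B_N$ of $T\setminus T_U$ whose attaching vertex is stabilized by $\mathrm{Stab}_U(B_N)$, pigeonhole on edge types to extract $e_0$, and verify by a clean two-case analysis on $u\in U$ (whether or not $u$ preserves $B_N$) that no $U$-translate can reduce the type-$e_0$ count on the geodesic from $a_N'$ to $uq_N$. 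What the paper buys is brevity, by treating Moon's Lemma~2.1 as a black box; what your proof buys is an explicit, constructive identification of the edge $e_0$ and the witnessing pairs of vertices at large quotient distance, with no reliance on the external lemma. Both arguments ultimately rest on the same fact --- finitely generated subgroups act cocompactly on an invariant subtree --- the paper implicitly via the cited lemma, and yours explicitly via the core $T_U$.
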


\begin{proof}
For $e \in Edge(\Gamma)$, let $Q_e$ be the graph obtained from $T$ by collapsing every edge which is not a preimage of $e$ under the quotient map $T \rightarrow G\backslash T=\Gamma$.  One easily verifies that $Q_e$ is a tree and that the action of $G$ on $T$ descends to an action of $G$ on $Q_e$.  The quotient of $Q_e$ by $G$ has a single edge and thus gives $G$ the structure of an amalgamated free product or an HNN extension.  Further, the stabilizers of the vertices of $Q_e$ are precisely conjugates of the vertex groups of $\Gamma_e$, and the stabilizers of edges are conjugates of the edge group of $\Gamma_e$: this can easily be verified by observing that for every $v' \in T$, the quotient map $T \rightarrow Q_e$ collapses the tree $T_{v'}$, which consists of all edge paths starting at $v'$ not containing preimages of $e$.  Thus, $Stab(w') \subseteq G$ for $w' \in Q_e$ is precisely the subgroup $Inv(T_{v'})\subseteq G$ which leaves $T_{v'}$ invariant.  It is easy to see that $Inv(T_{v'})$ contains every vertex stabilizer subgroup of $G$ for vertices in $T_{v'}$ and that $G$ splits as a direct product with amalgamation of $Inv(T_{v_1'})*_{Stab(e')} Inv(T_{v_2'})$ for two adjacent vertices $v_1', v_2' \in T$ joined by the edge $e'$ (or an HNN extension).  Thus, we conclude that $Q_e$ is the Bass-Serre tree for $\Gamma_e$.

Finally, to prove that $U\backslash Q_e$ has infinite diameter for some $e$, we argue by contradiction.  Suppose $U\backslash Q_e$ has finite diameter for every $e \in Edge(\Gamma)$.  First, observe that there is a bijection between the edges of $Q_e$ and the edges of $T$ which are not collapsed.  Because $Q_e$ is the Bass-Serre tree of an amalgamated free product or an HNN extension, Lemma 2.1 in \cite{Moon} shows that since $U\backslash Q_e$ is assumed to have finite diameter, it must be a finite graph of groups.  This, however, shows that there are finitely many $U$-orbits of edges in $Q_e$, hence there are finitely many $U$-orbits of edges in $T$ lying above $e \in Edge(\Gamma)$.  Because this is true by assumption for every edge $e \in Edge(\Gamma)$ and because $Edge(\Gamma)$ is finite, we conclude that there are finitely many $U$-orbits of edges in $T$.  This means that the quotient $U\backslash T$ must be a finite graph which contradicts the assumption that $U\backslash T$ has infinite diameter.  Therefore, there exists an edge $e \in Edge(\Gamma)$ such that $U\backslash Q_e$ has infinite diameter.
\end{proof}

Propositions \ref{HBassSerre} and \ref{InfDiam} allow us to prove the generalization of Theorem 2.4 in \cite{Moon} promised in the beginning of the section:
\begin{thm}\label{infdiamgr}Let $M$ be a compact 3-manifold with $\pi_1(M)=G$, and suppose that $M$ splits along an incompressible torus $\mathcal{T}$, $M=X_1 \cup_{\mathcal{T}} X_2$, or $M=X_1 \cup_{\mathcal{T}}$.  Suppose that:
	\begin{enumerate}
		\item $G$ contains a non-trivial subnormal subgroup $N=N_0\triangleleft ... \triangleleft N_{n-1} \triangleleft N_n=G$ such that $N \neq \mathbb{Z}$,
		\item at least $n$ terms in the subnormal series $N=N_0\triangleleft ... \triangleleft N_{n-1} \triangleleft N_n=G$ are finitely generated,
		\item  $G$ contains a finitely generated subgroup $U$ of infinite index in $G$ such that $N < U$.  
	\end{enumerate}
	If the graph of groups $\mathcal{U}$ corresponding to $U$ has infinite diameter, then $M$ is finitely covered by a torus bundle over $\mathbb{S}^1$ with fiber $T$, and $U$ and $\pi_1(T)$ are commensurable.
	\end{thm}

	\begin{proof}First, we note that if the splitting of $M$ induces a HNN extension structure on $G$ which is of the form $A*_{C}$ with $A = C$, then the conclusion follows from Lemma \ref{HNNsdp}, therefore in what follows we shall assume that this is not the case.
		
		From Lemma \ref{Cutoff} we conclude that our assumption on all but one of the $N_i$ being finitely generated is actually equivalent with the assumption that every $N_i$ for $i > 0$ is finitely generated.  We consider two cases: either $|N_i:N_{i-1}|<\infty$ for all $i > 0$, or there exists at least one occurrence of $|N_i:N_{i-1}|=\infty$ where $i > 0$.

\paragraph{\textit{Case 1}:} $|N_i:N_{i-1}|<\infty$ for all $i > 0$

\medskip
		
		In this case, all the $N_i$ have finite index in $G$, except for $N$, in particular $|G:N_1|<\infty$, and $N \triangleleft N_1 < G$.  Consider the finite cover $M_{N_1}$ of $M$ whose fundamental group is $N_1$.  This cover will be made up of covers for the pieces $X_1$ and $X_2$ glued along covers of the splitting torus $\mathcal{T}$.  Let $\chi$ be the Bass-Serre tree of the graph of groups induced by the splitting of $M$ along $\mathcal{T}$ which gives $G$ the structure of an amalgamated free product or HNN extension over $\mathbb{Z}^2$.  In view of Proposition \ref{HBassSerre}, $\chi$ is the Bass-Serre tree of $N_1$.  Then, $N_1$ acts on $\chi$ with quotient the graph of groups $N_1\backslash\chi$, which is also the graph of groups decomposition of $N_1$ obtained from the splitting of fundamental group of the cover $M_{N_1}$ along the fundamental groups corresponding to lifts of the splitting torus $\mathcal{T}$ of $M$.
		
		Consider now the subgroup $U \cap N_1$; we will show that $|U:U \cap N_1|<\infty$ from which it will follow that $U \cap N_1$ is a finitely generated, infinite index subgroup of $N_1$, which contains $N$:  There is a well-defined map $\psi$ from the coset space $U/U\cap N_1$ to the coset space $G/N_1$ given by $\psi(u\, U \cap N_1)=u N_1$.  This map is easily seen to be injective, hence $|U:U \cap N_1|<|G:N_1|<\infty$.  
		
		Note that $U \cap N_1$ acts on $\chi$ with an infinite diameter quotient:  The map $\left[ x \right]_{U \cap N_1\backslash\chi} \rightarrow \left[ x \right]_{U\backslash\chi}$ from $U \cap N_1\backslash\chi$ to $U\backslash\chi$ is onto.  Hence, if $U \cap N_1\backslash\chi$ had finite diameter $L$, we would conclude that any two vertices $v_1, v_2 \in U\backslash\chi$ would be at a distance at most $L$ as we can find an edge path in $U \cap N_1\backslash\chi$ of length at most $L$ between any two preimages of $v_1$ and $v_2$.  This path projects to a path of length at most $L$ between $v_1$ and $v_2$. 
		
		Since the finite index subgroup $U \cap N_1$ of $U$ also acts on $\chi$ with an infinite diameter quotient, Proposition \ref{InfDiam} shows that there is an edge in $N_1\backslash\chi$, such that the splitting of $N_1$ along the corresponding edge group has a Bass-Serre tree whose quotient by $U \cap N_1$ has infinite diameter.  The edge groups of $N_1\backslash\chi$ are all isomorphic to $\mathbb{Z}^2$ since they are the fundamental groups of lifts of the splitting torus $\mathcal{T}$ of $M$.  Therefore the cover $M_{N_1}$ is a torus sum and we can apply Theorem 2.4 in \cite{Moon} to conclude that a finite cover of $M_{N_1}$ fibers in the desired way, and that $U \cap N_1$, and therefore also $U$, is commensurable with the fundamental group of the fiber.
				
\paragraph{\textit{Case 2}:} $|N_i:N_{i-1}|=\infty$ for some $i>0$.

\medskip
		
		Let $i_0$ be the largest integer index for which $|N_{i_0}:N_{i_0-1}|=\infty$.  In this case, we consider the finite cover $M_{N_{i_0}}$ whose fundamental group is $N_{i_0}$.  Since $N_{i_0-1}$ is assumed to be finitely generated, it is also finitely presented by Theorem 2.1 in \cite{ScottCoh}. Now, $M_{N_{i_0}}$ fibers in the desired way by Theorem 3 of \cite{HempelJaco}, and further $N_{i_0-1}$ is a subgroup of finite index in $\pi_1(T)$.  Finally, we show that $U$ is commensurable with $\pi_1(T)$. Consider $U \cap N_{i_0-1}$; this group is a subgroup of $\mathbb{Z}^2$, therefore it is either trivial, $\mathbb{Z}$ or $\mathbb{Z}^2$. Since $U \cap N_{i_0-1}$ contains the non-trivial $N \neq \mathbb{Z}$, we must have $U \cap N_{i_0-1} \cong \mathbb{Z}^2$.  Because the finite cover $M_{N_{i_0}}$ of $M$ fibers over the circle, we have $|G:\pi_1(T)\rtimes \mathbb{Z}|<\infty$.  If $U \cap N_{i_0-1}$ were not of finite index in $U$, then $U$ would obviously be of finite index in $G$, which contradicts the assumptions on $U$.  Therefore, we conclude that $U$ is commensurable with $\pi_1(T)$, as desired.	
	\end{proof}

The last step towards proving our main theorem is a restatement of Theorem 2.9 in \cite{Moon}.  While the proof in \cite{Moon} only treats the case of $N$ being a normal subgroup of $\pi_1(M)$, it is obvious that it applies verbatim to the case of $N\triangleleft_{s}\pi_1(M)$.

\begin{thm}\label{findiamgr}\textbf{(Moon, Theorem 2.9 \cite{Moon}, 2005)} Let $M$ be a compact 3-manifold with $M=X_1 \cup_{\mathcal{T}} X_2$ or $M=X_1 \cup_{\mathcal{T}}$.  Suppose that $X_i$ satisfies the following condition for $i=1,2$: 
	\begin{enumerate}
		\item 	if $\pi_1(X_i)$ contains a finitely generated subgroup $U_i$ with $|\pi_1(X_i):U_i|=\infty$ such that $U_i$ contains a nontrivial subnormal subgroup $\mathbb{Z}\neq N_i \triangleleft_s \pi_1(X_i)$, then a finite cover of $X_i$ fibers over $\mathbb{S}$ with fiber a compact surface $F_i$ and $\pi_1(F_i)$ is commensurable with $U_i$,
		\item $G=\pi_1(M)$ contains a finitely generated subgroup $U$ of infinite index in $G$ which contains a nontrivial subnormal subgroup $N$ of $G$, and that $N$ intersects nontrivially the fundamental group of the splitting torus,
		\item  $N\cap\pi_1(X_i)\neq\mathbb{Z}$.
	\end{enumerate}
Suppose, further, that $G$ contains a finitely generated subgroup $U$ of infinite index in $G$ which contains a nontrivial subnormal subgroup $N$ of $G$, and that $N$ intersects nontrivially the fundamental group of the splitting torus and $N\cap\pi_1(X_i)\neq\mathbb{Z}$.  If the graph of groups $\mathcal{U}$ corresponding to $U$ is of finite diameter, then $M$ has a finite cover $\widetilde{M}$ which is a bundle over $\mathbb{S}$ with fiber a compact surface $F$, and $\pi_1(F)$ is commensurable with $U$.
\end{thm}

For the sake of brevity, let us introduce the following terminology: We shall say that a compact manifold $M$ \textit{has property (A)} if whenever $\pi_1(M)$ contains a finitely generated subgroup $U$ of infinite index, and a nontrivial subnormal subgroup $\mathbb{Z} \neq N \triangleleft_{s} G$ which has a subnormal series in which all but one terms are assumed to be finitely generated, and such that $N<U$, then $M$ fibers over $\mathbb{S}$ with fiber a compact surface $F$ such that $\pi_1(F)$ is commensurable with $U$.\

Combining Theorem \ref{infdiamgr} and Theorem \ref{findiamgr}, we obtain

\begin{thm}\label{propsprm}Let $M$ be a compact 3-manifold with $M=X_1 \cup_{\mathcal{T}} X_2$ or $M=X_1 \cup_{\mathcal{T}}$, and suppose that the following conditions are satisfied:
\begin{enumerate}
\item each $X_i$, for $i=1,2$, has property (A),
\item $G=\pi_1(M)$ contains a finitely generated subgroup $U$ of infinite index, which contains a nontrivial subnormal subgroup $N$ of $G$,
\item $N$ has a subnormal series in which the terms, except for $N$, are assumed to be finitely generated,
\item $N$ intersects nontrivially the fundamental group of the splitting torus $\mathcal{T}$,
\item  $N\cap\pi_1(X_i)\neq\mathbb{Z}$, for $i=1,2$.
\end{enumerate}
	Then, $M$ has a finite cover which is a bundle over $\mathbb{S}$ with fiber a compact surface $F$, and $\pi_1(F)$ is commensurable with $U$.
\end{thm}

\section{The main theorem}\label{MainS}
I am now ready to prove my main result.  Recall the Geometrization Theorem proved by Perelman in 2003:
	
\begin{thm}\label{GeomThm}\textbf{(Perelman, Geometrization Theorem \cite{Per1}, \cite{Per2}, \cite{Per3}, 2003)} Let $M$ be an irreducible compact 3-manifold with empty or toroidal boundary.  Then there exists a collection of disjointly embedded incompressible tori $\mathcal{T}_1, ... ,\mathcal{T}_k$ such that each component of $M$ cut along $\mathcal{T}_1 \cup ... \cup \mathcal{T}_k$ is geometric.  Furthermore, any such collection with a minimal number of components is unique up to isotopy.
\end{thm}

 To improve the exposition of the proof of the main theorem below, we define property $(A')$:  Let $M$ be a compact 3-manifold with empty or toroidal boundary which has a decomposition into geometric pieces $\mathfrak{D}=\left(M_1,...,M_2;\mathcal{T}_1,...,\mathcal{T}_p\right)$, where each $M_i$ is a compact geometric submanifold of $M$ with toroidal or empty boundary and each $T_i$ is an incompressible torus.  We shall say that $\left(M,\mathfrak{D}\right)$ has property $(A')$ if whenever $\pi_1(M)$ contains a finitely generated subgroup $U$ with $|\pi_1(M):U|=\infty$ and a nontrivial subnormal subgroup $\mathbb{Z}\neq N \triangleleft_s\pi_1(M)$ which has a subnormal series in which all but one terms are assumed to be finitely generated, such that $N<U$, and such that $N$ intersects nontrivially the fundamental groups of the splitting tori $\mathcal{T}_i$ in $\mathfrak{D}$ and $N\cap\pi_1(M_i)\neq\mathbb{Z}$ for all $X_i\in\mathfrak{D}$, then $M$ fibers over $\mathbb{S}$ with fiber a compact surface $F$ such that $\pi_1(F)$ is commensurable with U.

Using this notation, Theorem \ref{infdiamgr} and the proof of Theorem \ref{findiamgr} in \cite{Moon} together yield:

\begin{pro}\label{propsprime}Let $\left(X_1,\mathfrak{D}_1\right) $ and $\left( X_2,\mathfrak{D}_2\right) $ each be a compact manifold along with a decomposition into geometric pieces along incompressible tori.  Suppose that $\left(X_1,\mathfrak{D}_1\right) $ and $\left( X_2,\mathfrak{D}_2\right) $ have property $(A')$.  If $M = X_1\cup_{\mathcal{T}} X_2$ or $M = X_1 \cup_{\mathcal{T}}$, where $\mathcal{T}$ is an incompressible torus disjoint from the tori in $\mathfrak{D}_1$ and $\mathfrak{D}_2$, and if the tori and geometric pieces from $\mathfrak{D}_1$ and $\mathfrak{D}_2$ along with $\mathcal{T}$ together give a geometric decomposition $\mathfrak{D}$ for $M$, then $\left( M,\mathfrak{D}\right) $ has property $(A')$.
\end{pro}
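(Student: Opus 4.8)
The plan is to run the same two-case scheme as for Theorem \ref{propsprm}, splitting according to the diameter of the quotient of the Bass--Serre tree of the \emph{single} torus $\mathcal{T}$. Fix $U \leq G := \pi_1(M)$ and $N \triangleleft_s G$ witnessing the hypotheses in the definition of property $(A')$ for $(M,\mathfrak{D})$, so $N \neq \{1\}$, $N \neq \mathbb{Z}$, $N < U$ with $U$ finitely generated of infinite index, $N$ has a subnormal series all but one of whose terms is finitely generated, $N \cap \pi_1(\mathcal{T}_i) \neq \{1\}$ for each torus $\mathcal{T}_i$ of $\mathfrak{D}$, and $N \cap \pi_1(M_j) \neq \mathbb{Z}$ for each geometric piece $M_j$ of $\mathfrak{D}$. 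The splitting $M = X_1 \cup_{\mathcal{T}} X_2$ (resp.\ $M = X_1 \cup_{\mathcal{T}}$) expresses $G$ as $\pi_1(X_1) *_{\pi_1(\mathcal{T})} \pi_1(X_2)$ (resp.\ $\pi_1(X_1)*_{\pi_1(\mathcal{T})}$); if this degenerates to an HNN extension $A *_C$ with $A = C \cong \mathbb{Z}^2$, Lemma \ref{HNNsdp} already supplies a finite cover which is a torus bundle with fiber commensurable to $U$, so we may assume otherwise. Write $\chi$ for the Bass--Serre tree of this one-edge splitting and $\mathcal{U} = U\backslash\chi$.

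If $\mathcal{U}$ has infinite diameter, the hypotheses of Theorem \ref{infdiamgr} hold verbatim: $G$ has the nontrivial subnormal $N \neq \mathbb{Z}$ whose subnormal series has $n$ of its $n+1$ terms finitely generated, $U$ is finitely generated of infinite index, and $N < U$. Theorem \ref{infdiamgr} then shows $M$ is finitely covered by a torus bundle over $\mathbb{S}^1$ with fiber a torus $T'$ and $U$ commensurable with $\pi_1(T')$; a torus bundle being a compact-surface bundle, this is the fibering conclusion demanded by property $(A')$ (and the torus/piece conditions are not even needed here).

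If $\mathcal{U}$ has finite diameter, I would re-run the proof of Theorem \ref{findiamgr} (Moon's proof of his Theorem 2.9) essentially word for word, with the one change that wherever that proof concludes that a finite cover of a vertex piece $X_i$ fibers (which in \cite{Moon} rests on a property-(A)-type assumption on $X_i$), we instead derive it from property $(A')$ of the pair $(X_i,\mathfrak{D}_i)$. To license this we must check that the finitely generated, infinite-index $U_i \leq \pi_1(X_i)$ and the subnormal $N_i \triangleleft_s \pi_1(X_i)$ appearing in that argument — which, tracing it through, may be taken to be (conjugates of) $U \cap \pi_1(X_i)$ and $N \cap \pi_1(X_i)$ — satisfy the full list of $(A')$-hypotheses for $\mathfrak{D}_i$. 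Most of these transfer for free: every torus and every geometric piece of $\mathfrak{D}_i$ lies in $X_i$, so its fundamental group is a subgroup of $\pi_1(X_i)$, whence ``$N_i$ meets $\pi_1(\mathcal{T}_k)$ nontrivially'' and ``$N_i \cap \pi_1(M_j) \neq \mathbb{Z}$'' are literally the statements already assumed for $N$ inside $(M,\mathfrak{D})$; for the same reason $N_i \neq \{1\}$ (it contains $N \cap \pi_1(\mathcal{T})$), $N_i \neq \mathbb{Z}$ (it contains $N \cap \pi_1(M_j) \neq \mathbb{Z}$ for any piece $M_j \subseteq X_i$), and $N_i < U_i$. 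Property $(A')$ of $(X_i,\mathfrak{D}_i)$ then provides the fiberings of the relevant finite covers of $X_1$ and $X_2$ that Moon's proof consumes, and the remainder of that proof glues them along $\mathcal{T}$ into a finite cover of $M$ which is a bundle over $\mathbb{S}$ with fiber commensurable to $U$.

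The step I expect to be the main obstacle is the last transfer above: verifying that $N_i = N \cap \pi_1(X_i)$ again admits a subnormal series all but one of whose terms is finitely generated. The natural candidate is the restricted series $N\cap\pi_1(X_i) = N_0\cap\pi_1(X_i) \triangleleft \cdots \triangleleft N_n\cap\pi_1(X_i) = \pi_1(X_i)$. Applying Lemma \ref{Cutoff} inside the 3-manifold group $\pi_1(X_i)$ — using Proposition 2.2 of \cite{Elkalla} exactly as in that lemma's proof, together with the fact that $N\cap\pi_1(X_i)$ is neither trivial nor $\mathbb{Z}$ — forces the non-finitely-generated terms of this series to form an initial segment, so it remains only to see that this segment has length at most one, i.e.\ that $N_j\cap\pi_1(X_i)$ is finitely generated for $j \geq 1$. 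This one gets from the compact core theorem applied to the cover of $M$ with fundamental group $N_j$: after making the (incompressible) preimages of $\mathcal{T}$ meet a compact core incompressibly, the resulting compact pieces are cores for the vertex subgroups $N_j\cap g\pi_1(X_i)g^{-1}$. With this in hand the whole argument goes through; by contrast the infinite-diameter case is immediate from Theorem \ref{infdiamgr}.
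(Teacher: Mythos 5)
The paper gives no proof of this proposition; it simply asserts that Theorem \ref{infdiamgr} together with the \emph{proof} of Theorem \ref{findiamgr} (i.e.\ Moon's proof of his Theorem 2.9) ``yields'' it. Your proposal fleshes out exactly that sketch: case-split on the diameter of $U\backslash\chi$ for the one-edge splitting over $\mathcal{T}$, invoke Theorem \ref{infdiamgr} (and Lemma \ref{HNNsdp} in the degenerate HNN case) when the diameter is infinite, and re-run Moon's proof of Theorem 2.9 when it is finite, replacing the fibering hypothesis on each vertex piece by property $(A')$ of $(X_i,\mathfrak{D}_i)$. So the overall route is the intended one, and you are right that the torus- and piece-intersection conditions for $(A')$ transfer for free to $(X_i,\mathfrak{D}_i)$.

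Where you go beyond the paper is in noticing that to invoke $(A')$ for $(X_i,\mathfrak{D}_i)$ one must actually produce, inside $\pi_1(X_i)$, a subnormal series for $N_i = N\cap\pi_1(X_i)$ all but one of whose terms is finitely generated, and that this is not automatic from the corresponding property of $N$ in $G$. That is a genuine subtlety the paper leaves entirely unaddressed, and raising it is to your credit. However, the fix you sketch is not yet tight. After applying the compact core theorem to $M_{N_j}$ and isotoping the core $K$ to meet the preimages of $\mathcal{T}$ incompressibly, your step ``the resulting compact pieces are cores for the vertex subgroups $N_j\cap g\pi_1(X_i)g^{-1}$'' does not come for free: a piece of $K$ inside a component $\tilde X_i$ of the preimage of $X_i$ is a compact submanifold of $\tilde X_i$, but the inclusion $\pi_1(\text{piece})\hookrightarrow \pi_1(\tilde X_i)$ could a priori be proper, so one needs a \emph{relative} core statement (in the spirit of Jaco--Shalen, McCullough, or Elkalla's handling of cores meeting incompressible surfaces) guaranteeing that the core may be chosen so that cutting along the incompressible preimages yields cores of the components. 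Until that is cited or proved, this step is a gap in your write-up; note, though, that the paper shares this gap, since it offers no argument at all for the transfer and essentially treats the conclusion of Theorem \ref{findiamgr} as available under the weaker hypothesis $(A')$.
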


\begin{thm}\label{Main}Let $M$ be a compact 3-manifold with empty or toroidal boundary.  If $G=\pi_1(M)$ contains a finitely generated subgroup $U$ of infinite index in $G$ which contains a nontrivial subnormal subgroup $N$ of $G$, then: (a) $M$ is irreducible, (b) if further: 
	\begin{enumerate}
		\item  $N$ has a subnormal series of length $n$ in which $n-1$ terms are assumed to be finitely generated,
		\item $N$ intersects nontrivially the fundamental groups of the splitting tori of some decomposition $\mathfrak{D}$ of $M$ into geometric pieces, and
		\item the intersections of $N$ with the fundamental groups of the geometric pieces are not isomorphic to $\mathbb{Z}$,
	\end{enumerate}
	 then, $M$ has a finite cover which is a bundle over $\mathbb{S}$ with fiber a compact surface $F$ such that $\pi_1(F)$ and $U$ are commensurable.
\end{thm} 

\begin{proof}First we prove (a):  By Theorem 1 in \cite{MilnorPrime} $M\cong M_1 \sharp M_2 \sharp...\sharp M_p$, where each $M_i$ is a prime manifold.  Then, we have $G=G_1*G_2*...*G_p$, where $G_i=\pi_1(M_i)$.  By Theorem 1.5 in \cite{Elkalla}, we must have $G_i={1}$ for $i \geq 2$, after possibly reindexing the terms.  Therefore, the Poincare Conjecture implies that $M_i \cong \mathbb{S}^3$ for all $i \geq 2$, and we conclude that $M$ is a prime manifold.  Therefore $M$ must be irreducible, for if it were not, then $M\cong \mathbb{S}^2\times\mathbb{S}$, hence $G=\mathbb{Z}$, $U=\left\lbrace 1\right\rbrace $, contradicting the hypothesis of the theorem.  To prove (b), we make an inductive argument to prove that every connected submanifold $M'\subseteq M$ which is a union of $X_i\in\mathfrak{D}$ has property $(A')$ with respect to its decomposition $\mathfrak{D}'$ into geometric pieces inherited from $\mathfrak{D}$.  We proceed by induction on the number $m$ of geometric pieces $X_i$ in $M'$.  It is clearly true that if $M'=X_i$ for some $i$, then $M'$ has property $(A')$ by Theorem \ref{geomsfiber}.  Suppose that all submanifolds which are a union of at most $m-1$ geometric pieces from $\mathfrak{D}$ have property $(A')$ with respect to their geometric decompositions along incompressible tori from $\mathfrak{D}$, and suppose that $M'$ is a union of $m$ geometric pieces from $\mathfrak{D}$ so that $\mathfrak{D}'=\left(X_{i_1},...,X_{i_m};\mathcal{T}_{i_1},...,\mathcal{T}_{i_s}\right)$.  If we cut $M'$ along all tori in $\mathfrak{D}'$ which are in $X_{i_m}$, $M'$ will be decomposed into connected submanifolds $N_1,...,N_w,X_{i_m}$, in such a way that each of the $N_i$ has a geometric decomposition $\mathfrak{D}_i$ which consists of at most $m-1$ pieces from $\mathfrak{D}$, and tori also from $\mathfrak{D}$.  Thus, it follows that each of the submanifolds $N_i,...,N_w, X_{i_m}$ with its geometric decomposition inherited from $\mathfrak{D}$ has property $(A')$.  Since $M'$ can be obtained from the pieces $N_1,...,N_w,X_{i_m}$ by performing a finite number of torus sums of the form $M_1\cup_{\mathcal{T}} M_2$ or $M_1\cup_{\mathcal{T}}$ for $M_i\in \left\lbrace N_1,...,N_w,X_{i_m}\right\rbrace $ and $\mathcal{T}\in\mathfrak{D}$, $M'$ also has property $(A')$ with respect to its geometric decomposition along tori from $\mathfrak{D}$ by Proposition \ref{propsprime}.  By induction, $M$ also has property $(A')$ hence it fibers in the required way.
\end{proof}

\section{Acknowledgments}\label{Ack}
I wish to thank Prof. Peter Scott, who was my Ph.D. thesis adviser at The University of Michigan in Ann Arbor, for introducing to me to the problem of the fibering of compact 3-manifolds over the circle and for the many years of patient discussions and invaluable advice.  Notably, I am indebted to Prof. Scott for sketching an idea for the proof of Proposition \ref{HBassSerre} during one of our many e-mail discussions.  I would also like to thank the Department of Mathematics at The University of Michigan for giving me the opportunity to work with Prof. Peter Scott in the capacity of a Visiting Scholar and for granting me remote access to the university's library resources in the period of 2012 to 2014.  Finally, I would like to thank the anonymous referee for the many helpful suggestions, which have greatly contributed to the readability of this research article.


\begin{thebibliography}{12}


\bibitem{BHf} 
M. Bridson, A. Haefliger.
\newblock Metric spaces of non-positive curvature.
\newblock {\em Springer Verlag}, 1999.

\bibitem{Elkalla}
H. Elkalla.
\newblock Subnormal subgroups in 3-manifolds groups.
\newblock {\em J. London Math. Soc.}, 2(30), no. 2:342--360, 1984.

\bibitem{Griffiths}
H. Griffiths.
\newblock The fundamental group of a surface, and a theorem of Schreier.
\newblock {\em Acta mathematica}, 110:1--17, 1963.

\bibitem{Griffiths1}
H. Griffiths.
\newblock A covering-space approach to theorems of Greenberg in Fuchsian, Kleinian and other groups.
\newblock {\em Communications on Pure and Applied Mathematics}, 20:365--399, 1967.

\bibitem{Grfcorr}
H. Griffiths.
\newblock Correction to "A covering-space approach to theorems of Greenberg in Fuchsian, Kleinian and other groups".
\newblock {\em Communications on Pure and Applied Mathematics}, 2	:521--522, 1968.

\bibitem{HempelJaco}
J. Hempel, W. Jaco.
\newblock Fundamental Groups of 3-Manifolds which are Extensions.
\newblock {\em The Annals of Mathematics, Second Series}, 95(1):86--98, 1972.

\bibitem{MilnorPrime}
J. Milnor.
\newblock A unique decomposition theorem for 3-manifolds.
\newblock {\em American Journal of Mathematics}, 84(1):1--7, 1962.

\bibitem{Moon}
M. Moon.
\newblock A generalization of a theorem of Griffiths to 3-manifolds.
\newblock {\em Topology and its Applications}, 149:17--32, 2005.

\bibitem{Per1}
G. Perelman.
\newblock The entropy formula for the Ricci flow and its geometric applications.
\newblock {\em arXiv pre-print}, 1--39, 2002.

\bibitem{Per2}
G. Perelman.
\newblock Ricci flow with surgery on three-manifolds.
\newblock {\em arXiv pre-print}, 1--22, 2003.

\bibitem{Per3}
G. Perelman.
\newblock Finite extinction time for the solutions to the Ricci flow on certain three-manifolds.
\newblock {\em arXiv pre-print}, 1--7, 2003.

\bibitem{8Geom}
G. P. Scott.
\newblock The geometries of 3-manifolds.
\newblock {\em Bull. London Math. Soc.}, 15:401--487, 1983.

\bibitem{ScottCoh}
G. P. Scott.
\newblock Finitely generated 3-manifold groups are finitely presented.
\newblock {\em J. London Math. Soc.}, 6(2):437--440, 1973.

\bibitem{ScottWall}
G. P. Scott, T C Wall.
\newblock Topological methods in group theory.
\newblock {\em Homological group theory (Proc. Sympos., Durham)}, 137--203, 1977.

\bibitem{Stallings}
J. Stallings.
\newblock On fibering certain 3-manifolds.
\newblock {\em Topology of 3-manifolds, and Related Topics; Proc. of the University of Georgia Institute, GA}, 95--100, 1961.	

\end{thebibliography}
\end{document}